\newcommand{\andd}{\wedge}
\newcommand{\orr}{\vee}
\newcommand{\imp}{\rightarrow}
\newcommand{\biimp}{\leftrightarrow}
\newcommand{\Nb}{\mathbb{N}}
\newcommand{\Psf}{\mathsf{P}}
\newcommand{\Qsf}{\mathsf{Q}}
\newcommand{\Ccal}{\mathcal{C}}
\newcommand{\Fcal}{\mathcal{F}}
\newcommand{\Mcal}{\mathcal{M}}
\newcommand{\Pcal}{\mathcal{P}}
\newcommand{\Rcal}{\mathcal{R}}
\newcommand{\Scal}{\mathcal{S}}
\newcommand{\Tcal}{\mathcal{T}}
\newcommand{\uh}{{\upharpoonright}}
\renewcommand{\setminus}{\smallsetminus}
\newcommand{\tuple}[1]{\left\langle #1 \right\rangle}
\newcommand{\subseteqfin}{\subseteq_{\tiny\texttt{fin}}}
\newcommand{\s}[1]{\ensuremath{\sf{#1}}}
\newcommand{\rca}{\s{RCA}_0}
\newcommand{\wkl}{\s{WKL}}
\newcommand{\wwkl}{\s{WWKL}}
\newcommand{\dnr}{\s{DNC}}
\newcommand{\rt}{\s{RT}}
\newcommand{\srt}{\s{SRT}}
\newcommand{\psrt}{\s{psRT}}
\newcommand{\ads}{\s{ADS}}
\newcommand{\sads}{\s{SADS}}
\newcommand{\cac}{\s{CAC}}
\newcommand{\scac}{\s{SCAC}}
\newcommand{\wscac}{\s{WSCAC}}
\newcommand{\coh}{\s{COH}}
\newcommand{\emo}{\s{EM}}
\newenvironment{proof*}[1][\proofname]{
  
  \begin{proof}}{\end{proof}}
\definecolor{grey}{rgb}{0.95,0.95,0.95}
\definecolor{green}{rgb}{0.2,0.6,0.4}
\title{Partial orders and immunity in reverse mathematics}
\author{
  Ludovic Patey
}
\institute{
Laboratoire PPS, Universit\'e Paris Diderot, Paris, FRANCE\\
\email{ludovic.patey@computability.fr}
}
\date{\today}
\begin{document}

\maketitle

\begin{abstract}
We identify computability-theoretic properties enabling us to separate
various statements about partial orders in reverse mathematics. We obtain simpler
proofs of existing separations, and deduce new compound ones. This work is part of a larger
program of unification of the separation proofs of various Ramsey-type theorems in reverse mathematics
in order to obtain a better understanding of the combinatorics of Ramsey's theorem and its consequences.
We also answer a question of Murakami, Yamazaki and Yokoyama about pseudo Ramsey's theorem for pairs.
\end{abstract}

\section{Introduction}

Many theorems of ``ordinary'' mathematics are of the form
$$
(\forall X)[\Phi(X) \imp (\exists Y)\Psi(X,Y)]
$$
where $\Phi$ and $\Psi$ are arithmetic formulas.
They can be seen as \emph{mathematical problems},
whose \emph{instances} are sets~$X$ such that~$\Phi(X)$ holds,
and whose \emph{solutions} to~$X$ are sets~$Y$ such that~$\Psi(X,Y)$ holds.
For example, K\"onig's lemma asserts that every infinite, finitely branching 
tree admits an infinite path through it.
 
There exist many ways to calibrate the strength of a mathematical problem.
Among them, \emph{reverse mathematics} is a vast foundational program that seeks to determine the weakest
axioms necessary to prove ordinary theorems. It uses the framework of subsystems of second-order arithmetic,
within the base theory $\rca$, which can be thought of as capturing \emph{computable mathematics}.
An \emph{$\omega$-structure} is a structure whose first-order part consists of the standard integers.
The $\omega$-models of $\rca$ are those whose second-order part is a \emph{Turing ideal},
that is, a collection of sets~$\Scal$ downward-closed under the Turing reduction and closed under the effective join.

In this setting, an $\omega$-model $\Mcal$ satisfies a mathematical problem~$\Psf$
if every $\Psf$-instance in $\Mcal$ has a solution in~$\Mcal$.
A standard way of proving that a problem~$\Psf$ does not imply another problem~$\Qsf$
consists of creating an $\omega$-model $\Mcal$ satisfying $\Psf$ but not $\Qsf$.
Such a model is usually constructed by taking a ground Turing ideal, and extending it
by iteratively adding solutions to its $\Psf$-instances. However, while taking the
closure of the collection~$\Mcal \cup \{Y\}$ to obtain a Turing ideal, one may add solutions
to $\Qsf$-instances as well. The whole difficulty of this construction consists
of finding the right computability-theoretic notion preserved by $\Psf$ but not by~$\Qsf$.

We conduct a program of identification of the computability-theoretic properties
enabling us to distinguish various Ramsey-type theorems in reverse mathematics,
but also under computable and Weihrauch reducibilities.
This program puts emphasis on the interplay between computability theory and reverse mathematics,
the former providing tools to separate theorems in reverse mathematics over standard models,
and the latter exhibiting new computability-theoretic properties.

Among the theorems studied in reverse mathematics, the ones coming from Ramsey's theory play
a central role. Their strength are notoriously hard to gauge, and required the development
of involved computability-theoretic frameworks. Perhaps the most well-known example
is Ramsey's theorem.

\begin{definition}[Ramsey's theorem]
A subset~$H$ of~$\omega$ is~\emph{homogeneous} for a coloring~$f : [\omega]^n \to k$ (or \emph{$f$-homogeneous}) 
if each $n$-tuple from~$H$ is given the same color by~$f$. 
$\rt^n_k$ is the statement ``Every coloring $f : [\omega]^n \to k$ has an infinite $f$-homogeneous set''.
\end{definition}

Jockusch~\cite{Jockusch1972Ramseys} conducted a computational analysis of Ramsey's theorem.
He proved in particular that $\rt^n_k$ implied the existence of the halting set whenever $n \geq 3$.
There has been a lot of literature around the strength of Ramsey's theorem for pairs~\cite{Cholak2001strength,Dzhafarov2009Ramseys,Hirschfeldt2008strength,Seetapun1995strength}
and its consequences~\cite{Cholak2001Free,Csima2009strength,Hirschfeldt2007Combinatorial}.
We focus on some mathematical statements about partial orders which are consequences
of Ramsey's theorem for pairs.

\begin{definition}[Chain-antichain]
A \emph{chain} in a partial order $(P, \leq_P)$ is a set $S \subseteq P$ such that $(\forall x, y \in S)(x \leq_P y \orr y \leq_P x)$.  An \emph{antichain} in $P$ is a set $S \subseteq P$ such that $(\forall x, y \in S)(x \neq y \imp x |_P y)$ (where $x |_P y$ means that $x \nleq_P y \andd y \nleq_P x$).   $\cac$ is the statement ``every infinite partial order has an infinite chain or an infinite antichain.'' 
\end{definition}

The chain-antichain principle was introduced by Hirschfeldt and Shore~\cite{Hirschfeldt2007Combinatorial}
together with the ascending descending sequence ($\ads$). They studied extensively cohesive and stable versions
of the statements, and proved that $\cac$ is computationally weak, in that it does not even imply the existence
of a diagonally non-computable function. However, their proof has an ad-hoc flavor, in that
it is a direct separation involving the two statements. Later, Lerman, Solomon and Towsner~\cite{Lerman2013Separating}
separated~$\ads$ from~$\cac$ over $\omega$-models by using an involved iterated forcing argument.

In this paper, we revisit the two proofs and emphasize the combinatorial nature of the principles
by identifying the computability-theoretic properties separating them. Those properties happen to be very natural
and coincide on co-c.e.\ sets with some well-known computability-theoretic notions, namely, immunity and hyperimmunity.
The proof of the separation of $\ads$ from~$\cac$ is significantly simpler and more modular,
as advocated by the author in~\cite{Patey2015Iterativea}.
Last, we give a simpler separation of two versions of stability for the chain-antichain principles
over computable reducibility, which was previously proven by Astor et al.~\cite{Astor2016uniform} by the means of a mutually dependent
elaborate notion of forcing.\footnote{
This paper is an extended version of a conference paper of the same name published in CiE 2016.}

\subsection{Notation and definitions}

Given two sets $A$ and $B$, we denote by $A < B$ the formula
$(\forall x \in A)(\forall y \in B)[x < y]$
and by $A \subseteq^{*} B$ the formula $(\forall^{\infty} x \in A)[x \in B]$,
meaning that $A$ is included in $B$ \emph{up to finitely many elements}.
A \emph{Mathias condition} is a pair $(F, X)$
where $F$ is a finite set, $X$ is an infinite set and $F < X$.
A condition $(F_1, X_1)$ \emph{extends } $(F, X)$ (written $(F_1, X_1) \leq (F, X)$)
if $F \subseteq F_1$, $X_1 \subseteq X$ and $F_1 \setminus F \subset X$.
A set $G$ \emph{satisfies} a Mathias condition $(F, X)$
if $F \subset G$ and $G \setminus F \subseteq X$.

\section{Preservation of properties for co-c.e.\ sets}

Ramsey's theorem for $k$ colors has a deeply disjunctive nature. One cannot know in a finite amount of time
whether a coloring will admit an infinite homogeneous set for a fixed color, and one must therefore
build multiple homogeneous sets simultaneously, namely, one for each color.
This disjunction was exploited by the author to show for example that
$\ads$ does not preserve 2 hyperimmunities simultaneously, whereas the Erd\H{o}s-Moser theorem
does~\cite{Patey2015Iterativea}. This idea was also used in the context of computable reducibility
to show that~$\rt^2_{k+1}$ does not computably reduce to~$\rt^2_k$ whenever $k \geq 1$,
by showing that~$\rt^2_k$ preserves 2 among $k+1$ hyperimmunities simultaneously
whereas $\rt^2_{k+1}$ does not~\cite{Patey2016weakness}.
In this section, we shall see that this disjunctive flavor disappears whenever
considering co-c.e.\ sets. In particular, $\rt^2_2$ admits preservation
of countably many hyperimmune co-c.e.\ sets simultaneously.

\begin{definition}[Hyperimmunity]
An \emph{array} is a sequence of mutually disjoint finitely coded sets.
An array $F_0, F_1, \dots$ \emph{traces} a set~$A$ if $F_i \cap A \neq \emptyset$ for every~$i \in \omega$.
A set~$A$ is \emph{$X$-hyperimmune} if it is not traced by any $X$-computable array.
\end{definition}

Equivalently, a set is $X$-hyperimmune if its principal function
is not dominated by any $X$-computable function, where the \emph{principal function} 
$p_A$ of a set~$A = \{x_0 < x_1 < \dots \}$ is defined by $p_A(i) = x_i$.

\begin{definition}[Preservation of hyperimmunity for co-c.e.\ sets]
A $\Pi^1_2$ statement~$\Psf$ admits \emph{preservation of hyperimmunity for co-c.e.\ sets} if
for every set $Z$, every sequence of \emph{$Z$-co-c.e.} $Z$-hyperimmune sets~$A_0, A_1, \dots$
and every $\Psf$-instance~$X \leq_T Z$, there is a solution~$Y$ to~$X$ such that the~$A$'s are 
$Y \oplus Z$-hyperimmune.
\end{definition}

The remainder of this section is devoted to the proof that preservation of 1 hyperimmunity
and preservation of (countable) hyperimmunity for co-c.e.\ sets coincide.

\begin{lemma}\label{lem:co-ce-hyp-union-hyp}
If $A_0$ and~$A_1$ are co-c.e.\ hyperimmune sets,
then so is $A_0 \cup A_1$.
\end{lemma}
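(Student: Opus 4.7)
The plan is to argue by contrapositive: assume there is a computable array $(F_i)_{i \in \omega}$ that traces $A_0 \cup A_1$, and use the hyperimmunity of $A_0$ together with the co-c.e.\ nature of $A_0$ to extract a computable subarray that traces $A_1$, contradicting the hyperimmunity of $A_1$.

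First I would observe that since $A_0$ is hyperimmune, the set $I = \{i : F_i \cap A_0 = \emptyset\}$ must be infinite. Indeed, if $I$ were finite, then removing its (finitely many) indices from $(F_i)$ would yield a computable array still tracing $A_0$, contradicting its hyperimmunity. This is the only slightly non-obvious step, and it hinges on the observation that hyperimmunity is robust under removing finitely many elements of any candidate trace.

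Next, since $\overline{A_0}$ is c.e.\ and each $F_i$ is a finite set given by its canonical code, the condition ``$F_i \subseteq \overline{A_0}$'' is $\Sigma^0_1$ in $i$, so $I$ is a c.e.\ set. Being c.e.\ and infinite, $I$ contains an infinite computable subset, which we enumerate in increasing order as $i_0 < i_1 < \cdots$. The subarray $(F_{i_j})_{j \in \omega}$ is then a computable sequence of pairwise disjoint finitely coded sets, and for every $j$ we have $F_{i_j} \cap (A_0 \cup A_1) \neq \emptyset$ while $F_{i_j} \cap A_0 = \emptyset$, so $F_{i_j} \cap A_1 \neq \emptyset$. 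Thus $(F_{i_j})$ is a computable array tracing $A_1$, contradicting the hyperimmunity of $A_1$. Since $A_0 \cup A_1$ is clearly co-c.e.\ (its complement $\overline{A_0} \cap \overline{A_1}$ being the intersection of two c.e.\ sets), this completes the argument.
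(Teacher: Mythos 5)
Your proof is correct and follows essentially the same approach as the paper's: both use the co-c.e.\ nature of $A_0$ to make the set of indices $i$ with $F_i \cap A_0 = \emptyset$ semi-decidable, both invoke hyperimmunity of $A_0$ to handle the case where that set is finite, and both extract a computable subarray tracing $A_1$ otherwise. The only difference is cosmetic: the paper presents this as a two-case analysis on an arbitrary array $\vec{F}$, whereas you fold the ``finite'' case into a preliminary claim that the index set must be infinite, then run a single contradiction.
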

\begin{proof}
If $A_0$ and $A_1$ are co-c.e., then so is $A_0 \cup A_1$.
We now prove that $A_0 \cup A_1$ is hyperimmune.
Let~$F_0, F_1, \dots$ be an array tracing~$A_0 \cup A_1$.
If for infinitely many~$s$, $F_s \cap A_0 = \emptyset$,
then we can $\vec{F}$-computably find infinitely many such~$s$
since $A_0$ is co-c.e. Note that for any such~$s$, $F_s \cap A_1 \neq \emptyset$.
Since $A_1$ is hyperimmune, it follows that $\vec{F}$ is not computable.
If for all but finitely many~$s$, $F_s \cap A_0 \neq \emptyset$,
then we can $\vec{F}$-compute an array tracing $A_0$,
and therefore $\vec{F}$ is not computable by hyperimmunity of~$A_0$.
\end{proof}

\begin{lemma}\label{lem:co-ce-countable-union-hyp}
Let~$A_0, A_1, \dots$ be a (non-effective) listing of co-c.e.\ hyperimmune sets.
There is a hyperimmune set~$B$ such that every array tracing any~$A_i$
computes an array tracing~$B$.
\end{lemma}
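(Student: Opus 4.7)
The plan is a non-effective, stage-by-stage construction: at stage $n$ I diagonalize against the $n$-th computable array $\vec F^{(n)}$ while committing to include a tail of each $A_i$ in $B$. By iterating \lem{lem:co-ce-hyp-union-hyp}, every finite union $C_n := A_0 \cup \cdots \cup A_n$ is co-c.e.\ hyperimmune, with $\overline{C_n} = \bigcap_{i \leq n} \overline{A_i}$ c.e.\ via an index (non-uniformly) obtainable from c.e.\ indices for the $\overline{A_i}$'s. I then pick an increasing sequence $0 = N_0 < N_1 < N_2 < \cdots$ and set
\[
B \;=\; \bigsqcup_{n \in \omega} \bigl( C_n \cap [N_n, N_{n+1}) \bigr),
\]
so that on the block $[N_n, N_{n+1})$ the set $B$ coincides with $C_n$.

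At stage $n$, with $N_n$ already defined, cofinitely many $F^{(n)}_s$ lie in $[N_n, \infty)$ (as $\vec F^{(n)}$ is an array), while the c.e.\ set $\{s : F^{(n)}_s \subseteq \overline{C_n}\}$ is infinite (because $\vec F^{(n)}$ is computable and $C_n$ is hyperimmune). Their intersection is a non-empty c.e.\ set from which I extract some $s_n$, and set $N_{n+1} := \max F^{(n)}_{s_n} + 1$, ensuring $F^{(n)}_{s_n} \subseteq [N_n, N_{n+1})$. Since $B$ agrees with $C_n$ on this block, $F^{(n)}_{s_n} \cap B = F^{(n)}_{s_n} \cap C_n = \emptyset$, so $\vec F^{(n)}$ does not trace $B$; ranging over $n$ this shows $B$ is hyperimmune.

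For the reduction, whenever $n \geq i$ we have $A_i \cap [N_n, N_{n+1}) \subseteq C_n \cap [N_n, N_{n+1}) \subseteq B$, hence $A_i \cap [N_i, \infty) \subseteq B$. Given any array $\vec F$ tracing $A_i$, since $\min F_s \to \infty$ I can $\vec F$-computably find a threshold $M$ with $F_s \subseteq [N_i, \infty)$ for every $s \geq M$; then $(F_s)_{s \geq M}$ is an $\vec F$-computable array that still satisfies $F_s \cap B \supseteq F_s \cap A_i \neq \emptyset$, so it traces $B$. The main subtlety is that both the listing of total computable arrays and the c.e.\ indices for the $\overline{A_i}$'s are non-uniform, making the whole construction of $(N_n)$ non-effective; this is harmless, since the lemma only asserts the existence of $B$.
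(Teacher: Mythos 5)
Your proof is correct, but it takes a genuinely different route from the paper's. You build $B$ by a non-uniform, block-by-block diagonalization: on block $[N_n,N_{n+1})$, $B$ agrees with the finite union $C_n = A_0 \cup \cdots \cup A_n$, which is co-c.e.\ and hyperimmune by iterating Lemma~\ref{lem:co-ce-hyp-union-hyp}, and the block is chosen to contain a member of the $n$-th total computable array disjoint from $C_n$, killing that array outright. The reduction is then transparent because $A_i \cap [N_i,\infty) \subseteq B$, so a tail of any array tracing $A_i$ traces $B$. The paper, by contrast, gives a single closed-form definition $B = \{\langle x,y\rangle : x\in A_0 \wedge y \in \bigcup_{j\leq x}A_j\}$ that depends only on the $A_i$'s and not on a (non-effective) listing of all computable arrays; there the reduction is again easy (fix a large $x \in A_0$ and project onto that column), while hyperimmunity is proved after the fact by a case analysis that extracts from a putative array for $B$ either an array tracing $A_0$ or an array tracing a finite union $\bigcup_{j\leq x_i}A_j$, invoking Lemma~\ref{lem:co-ce-hyp-union-hyp} at that point. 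Your approach is more elementary and direct; the paper's buys a $B$ defined uniformly from the $A_i$'s alone, with Lemma~\ref{lem:co-ce-hyp-union-hyp} used inside the verification rather than the construction. Two small points: the threshold $M$ in your reduction need not (and cannot in general) be found $\vec{F}$-computably — it is just a constant depending on $i$, which can be hard-wired, and that suffices for the claim that the tail is $\vec{F}$-computable; and at stages $n$ where the $n$-th partial computable function fails to be a total array there is nothing to do, so one should say explicitly that one then sets $N_{n+1}=N_n+1$ and moves on.
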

\begin{proof}
Let~$B = \{ \tuple{x, y} : x \in A_0 \wedge y \in \bigcup_{j \leq x} A_j \}$.
For every array $F_0, F_1, \dots$ tracing~$A_n$, and every~$x \in A_0$ such that~$x \geq n$,
the $\vec{F}$-computable array $G_0, G_1, \dots$ defined by $G_i = \{ \tuple{x, y} : y \in F_i \}$ traces~$B$.
We now prove that $B$ is hyperimmune. Let~$F_0, F_1, \dots$ be an array tracing~$B$.
We need to prove that~$\vec{F}$ is not computable.
Let~$G_0, G_1, \dots$ be the $\vec{F}$-computable sequence defined inductively as follows:
$G_0 = \{x_0\}$ for some~$x_0 \in A_0$.
Assume we have defined~$G_i$, and let $x_i = \max G_i$.
We search for stages~$t > s > i$ such that
$$
(\forall \tuple{x, y} \in F_s)[x \leq x_i \rightarrow (x \not \in A_{0,t} \vee y \not \in \bigcup_{j \leq x} A_{j,t})]
$$
where $A_{j,s}$ is the approximation of the co-c.e.\ set $A_j$ at stage $s$.
If we find such a stage, we let $G_{i+1} = \{ x > x_i : \tuple{x, y} \in F_s \}$.
We have two cases.
In the first case, the sequence of the $G$'s is infinite. By construction, $G$ is an $\vec{F}$-computable array tracing
	$A_0$. It follows that $\vec{F}$ is not computable by hyperimmunity of~$A_0$.
In the second case, the sequence of the~$G$'s is finite. Let~$G_i$ be its last element and $x_i = \max G_i$.
	Then, for every $t > s \geq i$, there is some~$\tuple{x, y} \in F_s$,
	such that $x \leq x_i$ and $y \in \bigcup_{j \leq x} A_j$.
	For every~$s \geq i$, let~$H_s = \{ y : (\exists \tuple{x, y} \in F_s)[x \leq x_i] \}$.
	The $\vec{F}$-computable array $\vec{H}$ traces $\bigcup_{j \leq x_i} A_j$.
	However, each~$A_j$ is co-c.e.\ and hyperimmune, so by Lemma~\ref{lem:co-ce-hyp-union-hyp}, $\bigcup_{j \leq x_i} A_j$
	is hyperimmune. It follows that $\vec{F}$ is not computable.
\end{proof}

\begin{lemma}\label{lem:preservation-hyperimmunity-coce}
If $\Psf$ admits preservation of 1 hyperimmunity, then it admits 
preservation of hyperimmunity for co-c.e.\ sets.
\end{lemma}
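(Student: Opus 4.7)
The plan is to reduce preservation of countably many co-c.e.\ hyperimmunities to preservation of a single hyperimmunity, using the merging construction of Lemma~\ref{lem:co-ce-countable-union-hyp} as the bridge. Given a set $Z$, a $\Psf$-instance $X \leq_T Z$, and a $Z$-indexed sequence $A_0, A_1, \dots$ of $Z$-co-c.e.\ $Z$-hyperimmune sets, I first apply the relativization to $Z$ of Lemma~\ref{lem:co-ce-countable-union-hyp} to obtain a single $Z$-hyperimmune set~$B$ with the property that any array tracing some~$A_i$ computes an array tracing~$B$ (the construction inside Lemma~\ref{lem:co-ce-countable-union-hyp} is uniform in the parameter $n$, and once a witness $x \in A_0$ with $x \geq n$ is fixed, the resulting tracing array for $B$ is computed purely from the given tracing array).

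Next I would invoke the hypothesis that~$\Psf$ admits preservation of 1 hyperimmunity, applied to the set~$B$ over the oracle~$Z$: this yields a solution~$Y$ to~$X$ such that~$B$ is $(Y \oplus Z)$-hyperimmune. It then remains to verify that every~$A_i$ is itself $(Y \oplus Z)$-hyperimmune. Suppose, for contradiction, that some~$A_i$ is traced by a $(Y \oplus Z)$-computable array~$\vec{F}$. Since $A_0$ is $Z$-hyperimmune and hence infinite, one can $Z$-computably pick some $x \in A_0$ with $x \geq i$, and then the $(Y \oplus Z)$-computable array $G_k = \{\tuple{x, y} : y \in F_k\}$ traces~$B$, contradicting the $(Y \oplus Z)$-hyperimmunity of~$B$.

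The main potential pitfall is bookkeeping the oracles correctly throughout the reduction: one has to check that the construction of~$B$ in Lemma~\ref{lem:co-ce-countable-union-hyp} relativizes properly to~$Z$ (so that~$B$ is $Z$-hyperimmune, not merely hyperimmune), and that the reconstruction of a tracing array for~$B$ from a tracing array for~$A_i$ does not require any oracle beyond what is already available in $Y \oplus Z$ (in particular, it must not use $A_0$ as an oracle, only a finite parameter $x \in A_0$). Both of these are straightforward once one observes that the relevant constructions are uniform in the parameters involved, so the argument essentially reduces to a one-line application of preservation of 1 hyperimmunity after the merging step.
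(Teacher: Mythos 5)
Your proof takes essentially the same approach as the paper's: apply a relativization of Lemma~\ref{lem:co-ce-countable-union-hyp} to merge the countably many co-c.e.\ hyperimmune sets into a single $Z$-hyperimmune set~$B$, then invoke preservation of 1 hyperimmunity to obtain a solution~$Y$ with~$B$ still $(Y\oplus Z)$-hyperimmune, and conclude that each~$A_i$ is $(Y\oplus Z)$-hyperimmune because any tracing array for an~$A_i$ yields one for~$B$. The only stylistic difference is that you spell out the final verification and the oracle bookkeeping more explicitly, which the paper leaves implicit; one small note is that you need not \emph{effectively} locate $x\in A_0$ with $x\geq i$ --- it suffices to fix one nonuniformly as a parameter, as you indeed observe at the end.
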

\begin{proof}
Fix a set~$Z$, a countable sequence of $Z$-co-c.e.\ $Z$-hyperimmune sets $A_0, A_1, \dots$,
and a $Z$-computable $\Psf$-instance~$X$.
By a relativization of Lemma~\ref{lem:co-ce-countable-union-hyp}, there is a $Z$-hyperimmune set~$B$
such that every array tracing any $A_i$ $Z$-computes an array tracing $B$.
By preservation of 1 hyperimmunity of~$\Psf$, there is a solution $Y$ to $X$ such that
$B$ is $Y \oplus Z$-hyperimmune. It follows that each~$A_i$ is $Y \oplus Z$-hyperimmune.
\end{proof}

\begin{corollary}\label{cor:rt22-preservation-hyperimmunity-coce}
$\rt^2_2$ admits preservation of hyperimmunity for co-c.e.\ sets.
\end{corollary}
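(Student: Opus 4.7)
The plan is to invoke Lemma~\ref{lem:preservation-hyperimmunity-coce} directly: once one knows that $\rt^2_2$ admits preservation of a single hyperimmunity, the passage to countably many co-c.e.\ hyperimmune sets is free. So all the real content lives in the auxiliary statement ``$\rt^2_2$ preserves 1 hyperimmunity'', which is essentially a result from the author's earlier work but worth sketching since it is what is being reused.

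The proof of that auxiliary preservation proceeds in the standard two-step fashion for $\rt^2_2$. Fix $Z$, a $Z$-hyperimmune set $A$, and a $Z$-computable coloring $f : [\omega]^2 \to 2$. First, one builds a cohesive set $C \subseteq \omega$ for the $Z$-computable array of sets $\{n : f(m,n) = 0\}$ (indexed by $m$), using Mathias forcing with $Z$-computable reservoirs. A routine density argument shows that cohesiveness, carried out inside infinite $Z$-computable reservoirs, preserves the $Z$-hyperimmunity of $A$: any $(C \oplus Z)$-computable array tracing $A$ would, by forcing, be forced to be traced by a $Z$-computable array, contradicting hyperimmunity. Relative to such a $C$, the coloring $f$ becomes stable, and it remains to handle the stable case.

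In the stable case, every $n$ has a limit color $d(n) \in 2$ under $f$, and the set $L_i = \{n : d(n) = i\}$ is $\Delta^{0,C\oplus Z}_2$ for $i \in \{0,1\}$; one of them is infinite. The task is to thin out the infinite side to an $f$-homogeneous set $H$ while keeping $A$ hyperimmune relative to $H \oplus C \oplus Z$. One uses Mathias conditions $(F,X)$ where $F$ is a finite subset of some fixed $L_i$, $X$ is an infinite subset of $L_i$, and, crucially, $X$ belongs to a $\Pi^{0,C\oplus Z}_1$ class which controls the reservoir in such a way that any putative $(H \oplus C \oplus Z)$-computable trace of $A$ produced by a forcing condition can be pushed into a $(C \oplus Z)$-computable trace of $A$, contradicting hyperimmunity of $A$ over $C \oplus Z$. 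The main obstacle is precisely this reservoir control: one needs to show that the relevant density lemma (for meeting each Turing functional) can be satisfied without shrinking $X$ outside an admissible class, which is the usual ``largeness plus $\Pi^0_1$-preservation'' argument familiar from preservation proofs for $\rt^2_2$.

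With the auxiliary preservation in hand, the corollary is immediate: given $Z$, a countable family $(A_i)_{i \in \omega}$ of $Z$-co-c.e.\ $Z$-hyperimmune sets, and a $Z$-computable instance $f$ of $\rt^2_2$, Lemma~\ref{lem:preservation-hyperimmunity-coce} packages the family into a single $Z$-hyperimmune $B$ whose preservation (by the auxiliary result applied to $f$ over $Z$) entails the preservation of every $A_i$, as desired.
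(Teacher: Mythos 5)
Your proposal is correct and follows the paper's proof exactly: invoke the result from \cite{Patey2015Iterativea} that $\rt^2_2$ admits preservation of 1 hyperimmunity, then apply Lemma~\ref{lem:preservation-hyperimmunity-coce}. The additional sketch of the cited preservation result is not part of the paper's argument (the paper simply cites it), so it does not affect the comparison, though your description of the stable step leans a bit on $\Pi^0_1$-class reservoir control where the actual argument is a disjunctive Mathias forcing building two candidate homogeneous sets simultaneously.
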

\begin{proof}
The author proved in~\cite{Patey2015Iterativea} that $\rt^2_2$ admits preservation of 1 hyperimmunity.
The corollary now follows from Lemma~\ref{lem:preservation-hyperimmunity-coce}.
\end{proof}

\section{$\cac$ and constant-bound immunity}\label{sect:cac-cb-immunity}

Hirschfeldt and Shore~\cite{Hirschfeldt2007Combinatorial}
separated $\cac$ from $\dnr$ in reverse mathematics by a direct construction.
$\dnr$ is the statement asserting, for every set~$X$, the existence of a function~$f$
such that $f(e) \neq \Phi^X_e(e)$ for every~$e$.
In this section, we extract the core of the combinatorics of their forcing argument
to exhibit a computability-theoretic property separating the two notions, namely,
constant-bound immunity.

\begin{definition}[Constant-bound immunity]
A \emph{$k$-enumeration} ($k$-enum) of a set~$A$ is an infinite sequence of $k$-sets~$F_0 < F_1 < \dots$
such that for every~$i \in \omega$, $F_i \cap A \neq \emptyset$.
A \emph{constant-bound enumeration} (c.b-enum) of a set~$A$ is a $k$-enumeration of~$A$ for some~$k \in \omega$.
A set $A$ is \emph{$k$-immune (c.b-immune) relative to~$X$} if it admits no $X$-computable $k$-enumeration (c.b-enumeration).
\end{definition}

In particular, 1-immunity coincides with the standard notion of immunity. 
Also note that one can easily create a c.b-immune set computing no effectively immune set.
The following lemma shows that c.b-immunity and immunity coincide for co-c.e.\ sets.

\begin{lemma}\label{lem:cbimmunity-is-immunity-coce}
An $X$-co-c.e.\ set~$A$ is c.b-immune relative to~$X$ iff it is $X$-immune.
\end{lemma}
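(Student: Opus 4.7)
The plan is to handle the two directions of the biconditional separately. The forward implication---c.b.-immunity implies $X$-immunity---is immediate and uses neither the co-c.e.\ hypothesis nor any recursion, since an $X$-computable $1$-enumeration is in particular an $X$-computable c.b.-enumeration. So the nontrivial content is the converse.

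For that direction I would argue the contrapositive: assume $A$ is $X$-co-c.e.\ and admits an $X$-computable $k$-enumeration $(F_i)_{i \in \omega}$ for some $k \geq 1$, and build an infinite $X$-computable subset of $A$, thereby witnessing the failure of $X$-immunity. The argument proceeds by induction on $k$. The base case $k=1$ is trivial, as $\bigcup_i F_i$ is itself such a subset. For the inductive step, let $B = \{\min F_i : i \in \omega\}$; this is $X$-computable and infinite, since $F_0 < F_1 < \cdots$ forces $\min F_0 < \min F_1 < \cdots$. I split on whether $B \cap (\omega \setminus A)$ is finite or infinite. If finite, then $B \cap A$ is cofinite in $B$, hence $X$-computable as a finite modification of $B$, and it is infinite and contained in $A$. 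If infinite, then $I = \{ i : \min F_i \in \omega \setminus A\}$ is an infinite $X$-c.e.\ set (we enumerate it by waiting for $\min F_i$ to appear in the c.e.\ enumeration of $\omega \setminus A$), from which I extract an $X$-computable increasing subsequence $i_0 < i_1 < \cdots$ by iteratively picking the next enumerated element that exceeds the previous choice. Setting $F'_j = F_{i_j} \setminus \{\min F_{i_j}\}$ then yields an $X$-computable $(k-1)$-enumeration of $A$: each $F'_j$ has size at most $k-1$, and $F'_j \cap A = F_{i_j} \cap A$ is non-empty because $\min F_{i_j}$ lies in $\omega \setminus A$. The inductive hypothesis finishes the argument.

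The main subtlety I would flag is the finite case of the dichotomy: there the existence of the $X$-computable subset is a classical, non-uniform assertion, invoking that every finite set is computable without exhibiting an explicit code. This is exactly the point where co-c.e.-ness is used: semi-decidable membership in $\omega \setminus A$ lets finite exceptional sets be absorbed and powers the inductive descent from $k$ to $k-1$.
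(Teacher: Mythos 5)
Your proof is correct and follows essentially the same route as the paper's: the easy direction is handled directly, and the converse is proved by induction on $k$, splitting the inductive step according to whether $\min F_i \in \overline{A}$ for finitely or infinitely many $i$, using co-c.e.-ness to extract a computable index set in the infinite case and a finite modification of $\{\min F_i\}$ in the finite case. The only cosmetic difference is that you extract an explicit increasing subsequence $i_0 < i_1 < \cdots$ where the paper simply invokes an infinite $X$-computable subset $S$ of the relevant indices; the underlying argument is the same.
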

\begin{proof}
We first prove that if $A$ is not $X$-immune, then it is not c.b-immune relative to~$X$.
Let~$W = \{ w_0 < w_1 < \dots \}$ be an infinite $X$-computable infinite subset of~$A$.
The sequence $F_0, F_1, \dots$ defined by $F_i = \{ w_i \}$ is an $X$-computable 1-enum
(hence c.b-enum) of $A$. Therefore, $A$ is not c.b-immune relative to~$X$.

We now show by induction over~$k$ if $A$ is $X$-co-c.e.\ and has an $X$-computable $k$-enumeration $F_0, F_1, \dots$
then it has an infinite $X$-computable subset. If~$k = 1$, then it is already an infinite subset of~$A$.
Suppose now that~$k \geq 2$. If there are infinitely many $i \in \omega$
such that~$min(F_i) \in \overline{A} \neq \emptyset$, then since~$A$ is $X$-co-c.e.,
one can find an $X$-computable infinite set $S$ of such $i$'s.
The sequence~$\{F_i \setminus min(F_i) : i \in S \}$ is an $X$-computable $(k-1)$-enumeration of~$A$,
and by induction hypothesis, there is an $X$-computable subset of~$A$.
If there are only finitely many such~$i$'s, then the sequence~$\{min(F_i) : i \in \omega)\}$ is, up to finite  changes,
an infinite $X$-computable subset of~$X$. \qed
\end{proof}

\begin{definition}[Preservation of c.b-immunity]
A $\Pi^1_2$ statement~$\Psf$ admits preservation of c.b-immunity if
for every set $Z$, every set~$A$ which is c.b-immune relative to~$Z$,
and every $\Psf$-instance~$X \leq_T Z$, there is a solution~$Y$ to~$X$ such that $A$ is c.b-immune relative to~$Y \oplus Z$.
\end{definition}

We can easily relate the notion of preservation of c.b-immunity with the existing
notion of constant-bound enumeration avoidance defined by Liu~\cite{Liu2015Cone} to separate
$\rt^2_2$ from~$\wwkl$ over~$\rca$.

\begin{definition}[Constant-bound enumeration avoidance]
A \emph{$k$-enumeration} ($k$-enum) of a class~$\Ccal \subseteq 2^\omega$ is an infinite sequence of $k$-sets~$F_0 < F_1 < \dots$
of strings such that for every~$i \in \omega$, every string $\sigma \in F_i$ is of length $i$, 
and $\Ccal \cap [F_i] \neq \emptyset$, where $[F_i]$ is the clopen set of all sequences extending some string in $F_i$.
A \emph{constant-bound enumeration} (c.b-enum) of $\Ccal$ is a $k$-enumeration of~$\Ccal$ for some~$k \in \omega$.

A $\Pi^1_2$ statement~$\Psf$ admits constant-bound enumeration avoidance if
for every set $Z$, every class~$\Ccal \subseteq 2^\omega$ with no $Z$-computable c.b-enum,
and every $\Psf$-instance~$X \leq_T Z$, there is a solution~$Y$ to~$X$ such that $\Ccal$ has no $Y \oplus Z$-computable c.b-enum.
\end{definition}

\begin{lemma}\label{lem:preservation-immunity-enum-avoidance}
If $\Psf$ admits preservation of c.b-immunity, then it admits constant-bound enumeration avoidance.
\end{lemma}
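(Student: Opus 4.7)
The plan is to encode the class $\Ccal$ as a subset $A \subseteq \omega$ whose c.b-immunity relative to any oracle $W$ is equivalent, uniformly, to the nonexistence of a $W$-computable c.b-enum of $\Ccal$, and then apply preservation of c.b-immunity to $A$ as a black box. I would use a length-preserving coding that places strings of length $i$ into the disjoint integer interval $[2^i, 2^{i+1})$ via $\sigma \mapsto 2^{|\sigma|} + \mathrm{bin}(\sigma)$, and define $A = \{\, 2^{|\sigma|} + \mathrm{bin}(\sigma) : \sigma \in 2^{<\omega},\; [\sigma] \cap \Ccal \neq \emptyset \,\}$. The point of this coding is that strings of distinct lengths land in disjoint increasing integer intervals, so the order condition $F_0 < F_1 < \cdots$ transfers between the two settings for free.

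I would then establish that $A$ admits a $W$-computable c.b-enum if and only if $\Ccal$ does. The forward direction is immediate: a $k$-enum $F_0 < F_1 < \cdots$ of $\Ccal$, whose $F_i$ consists of $k$ strings of length $i$, maps levelwise to the $k$-set $\{2^i + \mathrm{bin}(\sigma) : \sigma \in F_i\} \subseteq [2^i, 2^{i+1})$, giving a $k$-enum of $A$ whose order is automatic. The converse is the delicate direction. Given a $k$-enum $G_0 < G_1 < \cdots$ of $A$, decode each $n \in G_j$ as the unique string $\sigma_n$ of length $i_n := \lfloor \log_2 n \rfloor$. Because $\min G_j \to \infty$, the minimum length $m_j := \min\{i_n : n \in G_j\}$ also tends to infinity. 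For each target level $i$ with $2^i \geq k$, I would $W$-computably locate the least $j$ with $m_j \geq i$ and take $F_i$ to be the set of length-$i$ prefixes $\{\sigma_n \restr i : n \in G_j\}$, padded with arbitrary fresh length-$i$ strings to reach size $k$. Any witness $n^\star \in G_j \cap A$ has $\sigma_{n^\star}$ extending into $\Ccal$, hence so does its prefix, so $\Ccal \cap [F_i] \neq \emptyset$.

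With the equivalence established, the rest is formal: the hypothesis on $\Ccal$ makes $A$ c.b-immune relative to $Z$, so preservation of c.b-immunity applied to $A$ and the $\Psf$-instance $X \leq_T Z$ yields a solution $Y$ with $A$ c.b-immune relative to $Y \oplus Z$, and instantiating the equivalence at $W = Y \oplus Z$ gives the desired conclusion. The main obstacle is the converse direction of the equivalence, since a $k$-enum of $A$ may mix, inside a single $G_j$, integers coding strings of widely different lengths, while a c.b-enum of $\Ccal$ requires exactly length-$i$ strings at level $i$. Collapsing each $G_j$ to a common minimum length by taking prefixes, and exploiting that $m_j \to \infty$, resolves this; the small-$i$ levels where fewer than $k$ length-$i$ strings exist are handled by starting the c.b-enum of $\Ccal$ from some finite level.
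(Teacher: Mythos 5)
Your proof is correct and takes essentially the same approach as the paper: define $A$ as the (integer-coded) set of strings extendible into $\Ccal$, argue that the degrees of c.b-enums of $A$ and of $\Ccal$ coincide, and then invoke preservation of c.b-immunity on $A$. Your converse direction, which waits until $\min G_j$ is large enough and then passes to length-$i$ prefixes (padding to size $k$), is simply a detailed unpacking of the paper's terse ``thin it out and normalize'' step.
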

\begin{proof}
Fix a non-empty class~$\Ccal \subseteq 2^\omega$, and let
$A = \{ \sigma : \Ccal \cap [\sigma] \neq \emptyset \}$.
We claim that the degrees of the c.b-enums of~$A$ and of~$\Ccal$ coincide.
Any c.b-enum of~$\Ccal$ is a c.b-enum of~$A$. 
Conversely, let~$F_0 < F_1 < \dots$ be a c.b-enum of~$\Ccal$.
We can computably thin it out and normalize it into an enumeration $E_0 < E_1 < \dots$
such that $|\sigma| = i$ for every~$\sigma \in E_i$. \qed
\end{proof}

Hirschfeldt and Shore~\cite{Hirschfeldt2007Combinatorial} proved that~$\cac$ is equivalent to
the existence of homogeneous sets for semi-transitive colorings.
A coloring~$f : [\mathbb{N}]^2 \to 2$ is \emph{semi-transitive}
if whenever~$f(x, y) = 1$ and~$f(y, z) = 1$, then~$f(x, z) = 1$ for $x < y < z$.
We shall use this equivalence to prove the following theorem.

\begin{theorem}\label{thm:cac-preservation-cbimmunity}
$\cac$ admits preservation of c.b-immunity.
\end{theorem}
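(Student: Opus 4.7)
The plan is to combine two ingredients: the Hirschfeldt--Shore reformulation of $\cac$ as the existence of homogeneous sets for semi-transitive colorings, and a two-colour Mathias forcing that builds both potential homogeneous sets in parallel. Fix a set $Z$, a set $A$ which is c.b-immune relative to $Z$, and a $Z$-computable semi-transitive coloring $f : [\mathbb{N}]^2 \to 2$; the goal is to produce an infinite $f$-homogeneous set $Y$ with $A$ still c.b-immune relative to $Y \oplus Z$.

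I would force with triples $(F_0, F_1, X)$ where each $F_c$ is a finite $c$-homogeneous set, $X$ is an infinite $Z$-computable set with $F_0 \cup F_1 < X$, and every $x \in X$ is \emph{usable} in the sense that $F_0 \cup \{x\}$ is $0$-homogeneous or $F_1 \cup \{x\}$ is $1$-homogeneous. Usability partitions $X$ $Z$-computably into $X_0 \cup X_1$, and a standard genericity argument yields two sets $G_0 \supseteq F_0$ and $G_1 \supseteq F_1$, at least one of which is infinite and serves as $Y$. Semi-transitivity of $f$ plays its role here: on the $1$-side, any element $1$-related to $\max F_1$ automatically extends $F_1$ to a $1$-homogeneous set, which keeps the reservoir and extension analysis tractable.

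Preservation of c.b-immunity reduces to showing that, for every Turing functional $\Phi_e$ and every $k \in \omega$, the set of conditions forcing ``$\Phi_e^{G_c \oplus Z}$ is not a $k$-enumeration of $A$'' is dense for both $c \in \{0,1\}$. Given a condition $(F_0, F_1, X)$, I would attempt to build a $Z$-computable c.b-enumeration of $A$ out of a failure of this density. Searching $Z$-computably with an increasing threshold, one looks for each $i \in \omega$ and each $c$ for an extension $F'_c$ of $F_c$ with $F'_c \setminus F_c \subseteq X_c$ such that $\Phi_e^{F'_c \oplus Z}(i)\downarrow = E^{(c)}_i$, where $E^{(c)}_i$ is a $k$-set with elements above the current threshold. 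If for some $i$ and some $c$ no such $F'_c$ exists, we extend the condition to force $\Phi_e^{G_c \oplus Z}(i)$ to diverge or to return a non-$k$-set, killing the $k$-enumeration attempt on side $c$; the other side is handled symmetrically. Otherwise, the sequence $E_i = E^{(0)}_i \cup E^{(1)}_i$ is a $Z$-computable sequence of sets of size at most $2k$, arranged in strictly increasing order and each hitting $A$, hence a $Z$-computable $(2k)$-enumeration of $A$, contradicting c.b-immunity of $A$ relative to $Z$.

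The main obstacle is the disjunctive structure of the construction: since we do not know in advance which of $G_0, G_1$ will end up infinite, requirements must be met on both sides simultaneously while preserving enough of $X$ on both sides for future extensions. This is precisely where c.b-immunity, as opposed to plain immunity, is the right invariant, because the merging step above combines two computable $k$-enumerations into a single computable $(2k)$-enumeration, and only the constant-bound notion of immunity is closed under this bound-doubling. Iterating the density argument over all pairs $(e,k)$ along a sufficiently generic filter then delivers the desired $f$-homogeneous set $Y$.
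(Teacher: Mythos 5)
Your high-level strategy matches the paper's: the Hirschfeldt--Shore reduction to semi-transitive colorings, a two-sided Mathias forcing building $G_0$ and $G_1$ in parallel with a disjunctive requirement, and the observation that merging two $k$-bounded searches into a $2k$-bounded one is exactly what makes c.b-immunity (rather than plain immunity) the right invariant. Those are the correct ingredients.

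However, the core density argument has the direction of the c.b-immunity appeal backwards, and this is a genuine gap, not a presentational slip. You assert that ``the sequence $E_i = E^{(0)}_i \cup E^{(1)}_i$ is\ldots each hitting $A$, hence a $Z$-computable $(2k)$-enumeration of $A$, contradicting c.b-immunity.'' But nothing in the search procedure guarantees that $E_i$ meets $A$ at every stage, and in fact c.b-immunity of $A$ relative to $Z$ says precisely that this \emph{cannot} happen: since $\vec{E}$ is a $Z$-computable sequence of disjoint sets of size $\leq 2k$, there must be some $n$ with $E_n \cap A = \emptyset$. That $n$ is the payoff, not a contradiction. The argument should locate such an $n$, observe that both $\Phi_e^{(F_0\cup E_0)\oplus Z}(n)$ and $\Phi_e^{(F_1\cup E_1)\oplus Z}(n)$ are subsets of $\overline{A}$, and then extend the condition on whichever side admits an infinite reservoir, thereby forcing that side's $\Phi_e$ to output a set disjoint from $A$.

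But that last step is exactly where your setup is missing the crucial combinatorics. To be able to commit to one of the two sides after the fact, the finite extensions $E_0$ and $E_1$ found during the search cannot be chosen independently: you need an interleaving (``split pair'') constraint, e.g.\ $E_1 < E_0 \subseteq X$ with $E_0$ $0$-homogeneous and $E_1 \cup \{y\}$ $1$-homogeneous for each $y \in E_0$. Then either $\{y \in X : E_0 \cup \{y\}\mbox{ is }0\mbox{-hom}\}$ is infinite (extend side $0$), or it is cofinitely avoided, and semi-transitivity propagates $1$-homogeneity along the chain $E_1 \cup \{x_y\} \cup \{y\}$ so that side $1$ retains a cofinite reservoir. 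Your ``usability'' invariant---each $x\in X$ extends $F_0$ \emph{or} $F_1$---is too weak to support this: after growing $F_0$ by an arbitrary $E_0 \subseteq X_0$, the new side-$0$ reservoir $\{y : E_0 \cup \{y\}\mbox{ is }0\mbox{-hom}\}$ may be finite, and nothing guarantees the side-$1$ option was preserved for the specific $E_1$ you need. The paper's forcing requires that every $x\in X$ extends \emph{both} $F_0$ and $F_1$ and uses the split-pair structure to guarantee a valid extension on at least one side; without an analogous device your density argument does not close.

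Finally, the statement that the set of conditions forcing ``$\Phi_e^{G_c\oplus Z}$ is not a $k$-enumeration of $A$'' is dense ``for both $c\in\{0,1\}$'' is too strong and cannot be right: the construction is inherently disjunctive, and one can only force the requirement on the side that ends up infinite, which is not known in advance. The density claim must be stated for the disjunctive requirement $\Rcal^{G_0}_{e_0,k_0} \vee \Rcal^{G_1}_{e_1,k_1}$, ranging over all four indices, not for each side separately.
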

\begin{proof}
Let~$A$ be a set c.b-immune relative to some set~$Z$, and let~$f : [\omega]^2 \to 2$ be a
$Z$-computable semi-transitive coloring.
Assume that there is no infinite $f$-homogeneous set~$H$ such that $A$ is c.b-immune relative to~$H \oplus Z$,
otherwise we are done. 
We will build two infinite sets~$G_0$ and~$G_1$, such that~$G_i$ is $f$-homogeneous for color~$i$ for each~$i < 2$,
and such that $A$ is c.b-immune relative to~$G_i \oplus Z$ for some~$i < 2$.

The construction is done by a variant of Mathias forcing $(F_0, F_1, X)$, 
where~$F_0$ and $F_1$ are finite sets, and $X$ is infinite $Z$-computable set such that~$max(F_0, F_1) < min(X)$.
Moreover, we require that for every~$i < 2$ and every~$x \in X$, $F_i \cup \{x\}$ is $f$-homogeneous for color~$i$.
A condition~$(E_0, E_1, Y)$ \emph{extends} $(F_0, F_1, X)$ if $(E_i, Y)$ Mathias extends $(F_i, X)$
for each~$i < 2$. A pair of sets~$G_0, G_1$ \emph{satisfies} a condition~$c = (F_0, F_1, X)$
if~$G_i$ is $f$-homogeneous for color~$i$ and satisfies the Mathias condition~$(F_i, X)$ for each~$i < 2$.

\begin{lemma}\label{lem:cac-immunity-force-Q}
For every condition~$c = (F_0, F_1, X)$ and every~$i < 2$, there is an extension $(E_0, E_1, Y)$
of~$c$ such that~$|E_i| > |F_i|$.
\end{lemma}
\begin{proof*}
Take any $x \in X$ such that the set~$Y = \{ y \in X : f(x, y) = i \}$ is infinite.
Such an~$x$ must exist, otherwise the set~$X$ is limit-homogeneous for color~$1-i$
and one can $Z$-compute an infinite $f$-homogeneous set, contradicting our hypothesis.
Let~$E_i = F_i \cup \{x\}$ and $E_{1-i} = F_{1-i}$, and take $(E_0, E_1, Y)$ as the desired extension.
\end{proof*}

In what follows, we interpret $\Phi_0, \Phi_1, \dots$
as Turing functionals outputting non-empty finite sets such that if $\Phi^X_e(x)$ and $\Phi^X_e(x+1)$
both halt, $\max(\Phi^X_e(x)) < \min(\Phi^X_e(x+1))$.
We want to satisfy the following requirements for each~$e_0, k_0, e_1, k_1 \in \omega$:
$$
\Rcal_{e_0, k_0, e_1, k_1} : \hspace{10pt} \Rcal^{G_0}_{e_0, k_0} \hspace{10pt} 
		\vee \hspace{10pt} \Rcal^{G_1}_{e_1, k_1}
$$
where $\Rcal_{e,k}^G$ is the requirement 
$$
(\exists x)\left(\Phi^{G \oplus Z}_e(x) \uparrow \vee |\Phi^{G \oplus Z}_e(x)| > k \vee 
 \Phi^{G \oplus Z}_e(x) \cap A = \emptyset\right).
$$
In other words, $\Rcal_{e,k}^G$ asserts that $\Phi^{G \oplus Z}_e$ is not a $k$-enumeration of~$A$.
A condition~$c$ \emph{forces} a formula~$\varphi(G_0, G_1)$ if
$\varphi(G_0, G_1)$ holds for every pair of \emph{infinite} sets~$G_0, G_1$ satisfying~$c$.

\begin{lemma}\label{lem:cac-immunity-force-R}
For every condition~$c$ and every vector of indices~$e_0, k_0, e_1, k_1 \in \omega$,
there is an extension~$d$ of~$c$ forcing~$\Rcal_{e_0, k_0, e_1, k_1}$.
\end{lemma}
\begin{proof*}
Fix a condition~$c = (F_0, F_1, X)$, and
let $P_0, P_1, \dots$ be an $Z$-computable sequence of sets where 
$P_n = \Phi_{e_0}^{(F_0 \cup E_0) \oplus Z}(x_0) \cup \Phi_{e_1}^{(F_1 \cup E_1) \oplus Z}(x_1)$
for a pair of sets~$E_1 < E_0 \subseteq X$ and some~$x_0, x_1 \in \omega$ such that 
$E_0$ is $f$-homogeneous for color~$0$, $E_1 \cup \{y\}$ is $f$-homogeneous for color~$1$ for each~$y \in E_0$,
and for each~$i < 2$, $\max(P_{n-1}) < \min(\Phi_{e_i}^{(F_i \cup E_i) \oplus Z}(x_i))$
and $|\Phi_{e_i}^{(F_i \cup E_i) \oplus Z}(x_i)| \leq k_i$.
We have two cases.

\begin{itemize}
	\item Case 1: the sequence of the~$P$'s is finite and is defined, say to level~$n-1$.
	If there is a pair of infinite sets~$G_0, G_1$ satisfying~$c$ and some~$x_1 \in \omega$ such that
	$\Phi_{e_1}^{G_1 \oplus Z}(x_1) \downarrow$, $\max(P_{n-1}) < \min(\Phi_{e_1}^{G_1 \oplus Z}(x_1))$,
	and $|\Phi_{e_1}^{G_1 \oplus Z}(x_1)| \leq k_1$,
	then let~$E_1 \subseteq G_1$ be such that $F_1 \cup E_1$ is an initial segment of~$G_1$
	for which $\Phi_{e_1}^{(F_1 \cup E_1) \oplus Z}(x_1) \downarrow$.
	The set~$Y = \{ y \in X : E_1 \cup \{y\} \mbox{ is } f\mbox{-homogeneous for color 1 } \}$
	is a superset of~$G_1$, hence is infinite.
	The condition~$d = (F_0, F_1 \cup E_1, Y)$ is an extension of~$c$ forcing $\Rcal_{e_0, k_0}^{G_0}$,
	hence forcing $\Rcal_{e_0, k_0, e_1, k_1}$.
	If there is no such pair of infinite sets~$G_0, G_1$, then the condition~$c$
	already forces $\Rcal_{e_1, k_1}^{G_1}$, hence $\Rcal_{e_0, k_0, e_1, k_1}$.

	\item Case 2: the sequence of the~$P$'s is infinite.
	By c.b-immunity of~$A$ relative to~$Z$, $P_n \cap A = \emptyset$ for some~$n \in \omega$.
	Let~$E_1 < E_0 \subseteq X$ and~$x_0, x_1 \in \omega$ witness the existence of~$P_n$.
	If~$Y_0 = \{ y \in X : E_0 \cup \{y\} \mbox{ is } f\mbox{-homogeneous for color 0 } \}$ is infinite,
	then the condition $(F_0 \cup E_0, F_1, Y_0)$ is an extension of~$c$ forcing $\Rcal_{e_0, k_0}^{G_0}$.
	If $Y_0$ is finite, then for almost every~$y \in X$, there is some~$x_y \in E_0$ such that~$f(x_y, y) = 1$,
	and by transitivity of $f$ for color 1, $E_1 \cup \{y\}$ is $f$-homogeneous for color~1.
	Indeed, $E_1$ is $f$-homogeneous for color~1 and for each~$x \in E_1$, $f(x, x_y) = f(x_y, y) = 1$.
	In this case, $(F_0, F_1 \cup E_1, Y_1)$ is an extension of~$c$ forcing $\Rcal_{e_1, k_1}^{G_1}$,
	for some~$Y_1 =^{*} X$.
	In both cases, there is an extension of~$c$ forcing $\Rcal_{e_0, k_0, e_1, k_1}$.
\end{itemize}
\end{proof*}

Let~$\Fcal = \{c_0, c_1, \dots\}$ be a sufficiently generic filter containing $(\emptyset, \emptyset, \omega)$,
where~$c_s = (F_{0,s}, F_{1,s}, X_s)$. The filter~$\Fcal$ yields a
pair of sets~$G_0, G_1$ defined by~$G_i = \bigcup_s F_{i,s}$.
By Lemma~\ref{lem:cac-immunity-force-Q}, the sets $G_0$ and~$G_1$ are both infinite,
and by Lemma~\ref{lem:cac-immunity-force-R}, the set $A$ is c.b-immune relative to~$G_i \oplus Z$ for some~$i < 2$.
This completes the proof of Theorem~\ref{thm:cac-preservation-cbimmunity}. \qed
\end{proof}

\begin{theorem}\label{thm:dnc-not-preservation-cbimmunity}
$\dnr$ does not admit preservation of c.b-immunity.
\end{theorem}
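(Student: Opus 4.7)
The plan is to establish the theorem by exhibiting a concrete counterexample to preservation of c.b-immunity under $\dnr$. Since an instance of $\dnr$ is just a set $X$ (whose solutions are the functions DNC relative to $X$), it suffices to work with $Z = X = \emptyset$ and produce a set $A$ that is c.b-immune relative to $\emptyset$ yet such that every DNC function $f$ computes a c.b-enumeration of $A$. Such an $A$ is not c.b-immune relative to any solution $Y$, witnessing that $\dnr$ does not preserve c.b-immunity.

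For the set $A$, I would exploit Lemma~\ref{lem:cbimmunity-is-immunity-coce}: choosing $A$ co-c.e.\ reduces the c.b-immunity requirement to plain immunity, a much more tractable notion. A natural candidate is the complement of a carefully designed effectively simple set, where the effective immunity bound is chosen to match the constant $k$ ultimately used in the enumerations built from DNC functions. An alternative presentation, via Lemma~\ref{lem:preservation-immunity-enum-avoidance}, would replace $A$ with a $\Pi^0_1$ class with the same decoding property and transfer the statement.

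The technical core is to show that every DNC function $f$ computes a c.b-enumeration of $A$. I would proceed recursion-theoretically: for each $i$, use the parameterized recursion theorem to produce $k$ program indices $e_{i,1}, \dots, e_{i,k}$ uniformly computable in $i$, such that the halting values $\Phi_{e_{i,j}}(e_{i,j})$ are tied to the construction of $A$ in an effectively immunity-respecting way. The DNC condition $f(e_{i,j}) \neq \Phi_{e_{i,j}}(e_{i,j})$ then forces at least one of the indices $e_{i,j}$ to witness membership in $A$, so the $k$-set $F_i = \{e_{i,1}, \dots, e_{i,k}\}$ meets $A$. Reading off $F_i$ from $f$ yields the desired $f$-computable c.b-enumeration.

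The main obstacle lies in the simultaneous twin requirement that $A$ (i) admits no computable c.b-enumeration yet (ii) admits an $f$-computable one from every DNC $f$. The construction must encode just enough structure into $A$ for the additional diagonalization strength of DNC functions to decode, while remaining opaque to purely computable enumerations. This narrow channel is typically navigated by a priority construction allocating disjoint blocks of $\omega$ to diagonalize against the candidate computable c.b-enumerations $\Phi_0, \Phi_1, \dots$, while preserving the recursion-theoretic hooks needed for the DNC-to-enum translation above.
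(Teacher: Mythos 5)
There is a genuine gap in the central mechanism of your argument, and it makes the proposal internally inconsistent. You want to produce, uniformly in $i$, a $k$-set $F_i = \{e_{i,1},\dots,e_{i,k}\}$ of recursion-theoretic indices \emph{computable in $i$}, and then argue that the DNC condition $f(e_{i,j}) \neq \Phi_{e_{i,j}}(e_{i,j})$ forces at least one $e_{i,j}$ to lie in $A$. But whether $e_{i,j} \in A$ is determined once $A$ is fixed; it does not depend on $f$ at all. If your claim held, the sequence $F_0 < F_1 < \dots$ would itself be a \emph{computable} $k$-enumeration of $A$ (since the $e_{i,j}$ are computable in $i$), directly contradicting the c.b-immunity of $A$ you set out to ensure. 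A DNC function can help a degree \emph{compute} information, but it cannot make a computably-determined number land inside a fixed target set. So the link from ``$f$ is DNC'' to ``one of the $e_{i,j}$ is in $A$'' cannot be closed in the form you describe.

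The paper's proof uses the DNC oracle in the opposite direction, and this is the idea your proposal is missing. From a DNC function one computes a function $g(e,n,i)$ that, whenever $|W_e| \leq n$, returns an element of $X_i \setminus W_e$; that is, DNC lets you \emph{escape} a bounded c.e.\ set, not hit a prescribed one. The construction splits $\omega$ into columns $X_i$, sets $F_i$ to be the first $\mu_{\emptyset'}(i)$ elements of $X_i$, and builds a c.e.\ set $W$ by a finite-injury argument so that $A = \bigcup_i F_i \setminus W$ is c.b-immune (this is where the c.b-immunity requirements are actually met, not via Lemma~\ref{lem:cbimmunity-is-immunity-coce}; note $A$ is $\Delta^0_2$, not co-c.e., so that lemma does not apply) while $|W \cap X_i| \leq i$. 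For each $i$, DNC trace-avoidance yields $n_i \in X_i \setminus W$; then either infinitely many $n_i$ land in $F_i$ (giving an $f$-computable infinite subset of $A$), or the $n_i$ dominate $\mu_{\emptyset'}$, so $f$ computes $\emptyset'$ and hence an infinite subset of the $\Delta^0_2$ set $A$. Your instinct to translate the DNC hypothesis into a bounded-trace statement is sound, but the direction of the translation and the architecture of $A$ both need to change along these lines for the argument to go through.
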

\begin{proof}
Let~$\mu_{\emptyset'}$ be the modulus function of~$\emptyset'$, that is,
such that~$\mu_{\emptyset'}(x)$ is the minimum stage~$s$ at which~$\emptyset^{'}_s \uh x = \emptyset' \uh x$.
The sketch of the proof is the following:

Computably split $\omega$ into countably many columns~$X_0, X_1, \dots$ of infinite size.
For example, set~$X_i = \{ \tuple{i, n} : n \in \omega \}$ where~$\tuple{\cdot, \cdot}$
is a bijective function from~$\omega^2$ to~$\omega$.
For each~$i$, let $F_i$ be the set of the $\mu_{\emptyset'}(i)$ first elements of~$X_i$.
The sequence~$F_0, F_1, \dots$ is $\emptyset'$-computable.
Assume for now that we have defined a c.e.\ set~$W$ 
such that the $\Delta^0_2$ set~$A = \bigcup_i F_i \setminus W$ is c.b-immune,
and such that $|X_i \cap W| \leq i$. We claim that every DNC function computes an infinite subset of~$A$.

Let~$f$ be any DNC function. By a classical theorem 
about DNC functions (see Bienvenu et al.~\cite{Bienvenu2015logical} for a proof),
$f$ computes a function~$g(\cdot, \cdot, \cdot)$ such that whenever~$|W_e| \leq n$,
then $g(e, n, i) \in X_i \setminus W_e$. For each~$i$, let~$e_i$ be the index of the
c.e.\ set~$W_{e_i} = W \cap X_i$, and let~$n_i = g(e_i, i, i)$.
Since $|X_i \cap W| \leq i$, $|W_{e_i}| \leq i$, hence $n_i = g(e_i, i, i) \in X_i \setminus W_{e_i}$,
which implies $n_i \in X_i \setminus W$. We then have two cases.

\begin{itemize}
	\item Case 1: $n_i \in F_i$ for infinitely many $i$'s. One can $f$-computably find infinitely many of them
	since $\mu_{\emptyset'}$ is left-c.e. and the sequence of the~$n$'s is $f$-computable.
	Therefore, one can $f$-computably find an infinite subset of~$\bigcup_i F_i \setminus W = A$.

	\item Case 2: $n_i \in F_i$ for only finitely many $i$'s. Then the sequence of the~$n_i$'s dominates
	the modulus function $\mu_{\emptyset'}$, and therefore computes the halting set.
	Since the set~$A$ is $\Delta^0_2$, $f$ computes an infinite subset of~$A$.
\end{itemize}

We now detail the construction of the c.e.\ set~$W$. In what follows, interpret $\Phi_e$ as a partial computable
sequence of finite sets such that if $\Phi_e(x)$ and~$\Phi_e(x+1)$ both halt, then $\max(\Phi_e(x)) < \min(\Phi_e(x+1))$.
We need to satisfy the following requirements for each~$e,k \in \omega$:
$$
\Rcal_{e,k} : \hspace{10pt} \left[\Phi_e \mbox{ total } \wedge (\forall i)(\forall^{\infty} x)(\Phi_e(x) \cap X_i = \emptyset)\right]
	\rightarrow (\exists x)\left[|\Phi_e(x)| > k \vee \Phi_e(x) \subseteq W \right]
$$
We furthermore want to ensure that~$|X_i \cap W| \leq i$ for each~$i$.
We can prove by induction over~$k$ that if $\Rcal_{e,\ell}$ is satisfied for each~$\ell \leq k$, then
the set $A = \bigcup_i F_i \setminus W$ admits no computable $k$-enumeration.
The case $k = 1$ is trivial, since if~$\Phi_e$ is total and has an infinite intersection with~$X_i$ for some~$i \in \omega$,
then it intersects $X_i \setminus F_i$, hence intersects $\overline{A}$.
For the case $k \geq 2$, assume that $\Phi_e$ is total, and has infinite intersection with~$X_i$ for some~$i \in \omega$.
By our assumption that $\max(\Phi_e(x)) < \min(\Phi_e(x+1))$, for large enough $n$,
$F_i < \Phi_e(n) \subseteq X_i$, and hence $\Phi_e(n) \subseteq \overline{A}$.
Otherwise, one can compute a $(k-1)$-enumeration $E_0 < E_1 < \dots$ of~$A$ 
by setting $E_n = \Phi_e(n) \setminus X_i$, and apply the induction hypothesis.

We now explain how to satisfy $\Rcal_{e,k}$ for each~$e, k \in \omega$.
For each pair of indices~$e,k \in \omega$, let~$i_{e,k} = \sum_{\tuple{e',k'} \leq \tuple{e,k}} k'$.
A strategy for~$\Rcal_{e,k}$ \emph{requires attention} at stage~$s > \tuple{e,k}$ if there is an $x < s$
such that $\Phi_{e,s}(x) \downarrow$,
$|\Phi_{e,s}(x)| \leq k$, and $\Phi_{e,s}(x) \subseteq \bigcup_{j \geq i_{e,k}} X_j$. Then, the strategy enumerates all the elements
of~$\Phi_{e,s}$ in~$W$, and is declared  \emph{satisfied}, and will never require attention again.
First, notice that if $\Phi_e$ is total, outputs $k$-sets, and meets finitely many times each~$X_i$, 
then it will require attention at some stage~$s$ and will be declared satisfied. Therefore each requirement~$\Rcal_{e,k}$ is satisfied.
Second, suppose for the sake of contradiction that $|X_i \cap W| > i$ for some~$i$. Let~$s$ be the stage at which it happens,
and let~$\tuple{e,k} < s$ be the maximal pair such that~$\Rcal_{e,k}$ has enumerated some element of~$X_i$ in~$W$. 
In particular, $i_{e,k} \leq i$.
Since the strategy for $\Rcal_{e',k'}$ enumerates at most~$k'$ elements in~$W$, 
$$
\sum_{\tuple{e',k'} \leq \tuple{e,k}} k' \geq |X_i \cap W| > i \geq i_{e,k} = \sum_{\tuple{e',k'} \leq \tuple{e,k}} k'
$$
Contradiction.
\end{proof}

\begin{corollary}[Hirschfeldt and Shore~\cite{Hirschfeldt2007Combinatorial}]
$\rca \wedge \cac \nvdash \dnr$.
\end{corollary}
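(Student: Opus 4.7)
The plan is to combine the two main theorems of the section into a standard $\omega$-model construction. Start with the $\Delta^0_2$ set $A$ constructed in the proof of Theorem~\ref{thm:dnc-not-preservation-cbimmunity}. That set is c.b-immune (relative to~$\emptyset$), yet every $\dnr$ function computes an infinite subset of it, so $A$ is not c.b-immune relative to any $\dnr$ function. The goal is to produce a Turing ideal $\Mcal$ that is an $\omega$-model of $\rca \wedge \cac$ in which $A$ remains c.b-immune relative to every element of~$\Mcal$.

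Build $\Mcal = \bigcup_s \Mcal_s$ by stages. Set $\Mcal_0$ to be the ideal generated by the computable sets (so $A$ is c.b-immune relative to each $Z \in \Mcal_0$). At stage $s$, fix an enumeration $(Z_s, X_s)_{s \in \omega}$ such that every pair consisting of an element $Z \in \Mcal$ and a $Z$-computable infinite partial order $X$ will appear at some stage. Given $\Mcal_s$, pick $Z_s \in \Mcal_s$ and a $Z_s$-computable $\cac$-instance~$X_s$; by Theorem~\ref{thm:cac-preservation-cbimmunity} (preservation of c.b-immunity for $\cac$), there is an infinite chain or antichain~$Y$ of $X_s$ such that $A$ remains c.b-immune relative to $Y \oplus Z_s$. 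Let $\Mcal_{s+1}$ be the ideal generated by $\Mcal_s \cup \{Y\}$; c.b-immunity of $A$ relative to each element of~$\Mcal_{s+1}$ is preserved, since any set Turing below $Y \oplus Z_s$ inherits this property.

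The resulting $\Mcal$ is a Turing ideal, hence an $\omega$-model of~$\rca$, and by construction every $\cac$-instance in~$\Mcal$ has a solution in~$\Mcal$, so $\Mcal \models \cac$. On the other hand, $A$ is c.b-immune relative to every $Z \in \Mcal$. If $\Mcal$ contained a $\dnr$ function~$f$, then by the argument in Theorem~\ref{thm:dnc-not-preservation-cbimmunity} (applied to the very same~$A$), $f$ would compute an infinite subset of~$A$, which is in particular a $1$-enumeration (hence c.b-enumeration) of~$A$, contradicting c.b-immunity of~$A$ relative to~$f$. Therefore $\Mcal \not\models \dnr$, which witnesses $\rca \wedge \cac \nvdash \dnr$.

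The main technical issue is only bookkeeping: one must ensure the enumeration $(Z_s, X_s)$ exhausts all pairs (instance, parameter) that ever appear in the growing ideal, which is done by standard dovetailing over pairs $(s, e)$ where $e$ codes a Turing functional applied to~$Z_s$. All the genuine combinatorial work has already been carried out in Theorems~\ref{thm:cac-preservation-cbimmunity} and~\ref{thm:dnc-not-preservation-cbimmunity}; the corollary is essentially the observation that preservation of c.b-immunity is the right property separating these two principles over $\omega$-models.
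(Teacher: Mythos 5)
Your approach is exactly the standard $\omega$-model construction that the paper intends for this corollary; the paper omits the details precisely because Theorems~\ref{thm:cac-preservation-cbimmunity} and~\ref{thm:dnc-not-preservation-cbimmunity} together with the routine iteration constitute the whole argument, and your decomposition and choice of separating property (c.b-immunity) match the paper's.

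There is, however, a bookkeeping slip that, as literally written, breaks the induction. You apply Theorem~\ref{thm:cac-preservation-cbimmunity} with parameter $Z = Z_s$, where $Z_s$ is merely \emph{some} element of $\Mcal_s$ computing the chosen instance $X_s$, and then you assert that every member of $\Mcal_{s+1}$ is Turing below $Y \oplus Z_s$. That is false in general: if $\Mcal_s$ contains sets not computable from $Z_s$, then $\Mcal_{s+1}$ contains their joins with $Y$, and these need not lie below $Y \oplus Z_s$, so nothing guarantees that $A$ stays c.b-immune relative to them. The fix is to observe that each $\Mcal_s$ in your construction is a principal Turing ideal generated by a single set $D_s$ (with $D_0 = \emptyset$ and $D_{s+1} = D_s \oplus Y$), and to invoke the preservation theorem with $Z = D_s$ rather than $Z = Z_s$. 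This is legitimate because $X_s \leq_T Z_s \leq_T D_s$, and it gives a solution $Y$ with $A$ c.b-immune relative to $Y \oplus D_s = D_{s+1}$, hence relative to every member of $\Mcal_{s+1}$. With that correction the invariant is maintained and the rest of your argument — including the final contradiction with the $\dnr$ function — goes through as stated.
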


\section{$\ads$ and pseudo Ramsey's theorem for pairs}

In this section, we answer a question of Murakami, Yamazaki and Yokoyama in~\cite{Murakami2014Ramseyan}
by proving the equivalence between the ascending descending sequence,
introduced by Hirschfeldt and Shore~\cite{Hirschfeldt2007Combinatorial} and pseudo Ramsey's theorem for pairs and two colors.
This equivalence was independently obtained by Steila in~\cite{SteilaReverse}.

\begin{definition}[Ascending descending sequence]
Given a linear order~$(L, <_L)$, an \emph{ascending} (\emph{descending}) sequence
is a set~$S$ such that for every~$x <_\Nb y \in S$, $x <_L y$ ($x >_L y$).
$\ads$ is the statement ``Every infinite linear order admits an infinite ascending or descending sequence''.
\end{definition}

Pseudo Ramsey's theorem for pairs was first introduced by Friedman~\cite{Friedman2010Adjacent}
and later studied by Friedman and Pelupessy~\cite{Friedman2016Independence},
and Murakami, Yamazaki and Yokoyama in~\cite{Murakami2014Ramseyan} 
who proved that it is between the chain antichain principle
and the ascending descending sequence principle over~$\rca$.

\begin{definition}[Pseudo Ramsey's theorem]
A set~$H = \{x_0 < x_1 < \dots \}$ is \emph{pseudo-homogeneous} for a coloring~$f : [\mathbb{N}]^n \to k$
if~$f(x_i, \dots, x_{i+n-1}) = f(x_j, \dots, x_{j+n-1})$ for every~$i, j \in \mathbb{N}$.
$\mathsf{psRT}^n_k$ is the statement ``Every coloring~$f : [\mathbb{N}]^n \to k$ has an infinite pseudo-homogeneous set''.
\end{definition}

We now prove the equivalence between the two statements.
Note however that the exact strength of $\mathsf{psRT}^2_k$
remains open in reverse mathematics whenever $k > 2$.

\begin{theorem}\label{thm:psrt22-is-ads}
$\mathsf{RCA}_0 \vdash \mathsf{psRT}^2_2 \leftrightarrow \ads$
\end{theorem}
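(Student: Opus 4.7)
The forward direction $\psrt^2_2 \imp \ads$ is straightforward, so I would address it first. Given a linear order $(L, <_L)$, define a coloring $f : [\Nb]^2 \to 2$ by setting $f(x, y) = 0$ when $x <_L y$ and $f(x, y) = 1$ otherwise (for $x <_\Nb y$). Applying $\psrt^2_2$ yields an infinite pseudo-homogeneous set $H = \{h_0 <_\Nb h_1 <_\Nb \dots\}$ with common color $c \in \{0, 1\}$. If $c = 0$ then $h_i <_L h_{i+1}$ for every $i$, and a routine $\Sigma^0_1$-induction using transitivity of $<_L$ upgrades this to $h_i <_L h_j$ for all $i < j$, making $H$ an infinite ascending sequence; the case $c = 1$ gives a descending sequence by the same argument with reversed inequalities.

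For the converse $\ads \imp \psrt^2_2$, my plan is to encode a given coloring $f : [\Nb]^2 \to 2$ into a linear order $(\Nb, <_L)$ so that any $\ads$-solution to $<_L$ is pseudo-homogeneous for $f$ when listed in $<_\Nb$-order. The naive attempt---let $x <_L y$ iff $x <_\Nb y$ and $f(x, y) = 0$, or $y <_\Nb x$ and $f(y, x) = 1$---produces a tournament that is total and antisymmetric but generally fails transitivity, hence is not a legal $\ads$-instance. Instead, I would build $<_L$ stage by stage, inserting $n+1$ into the order already defined on $\{0, \dots, n\}$ at a position determined from the tuple $(f(0, n+1), \dots, f(n, n+1))$, chosen so that $<_L$ stays transitive at every stage and so that any $<_L$-monotone sequence, listed in $<_\Nb$-order, has $f$-color $0$ (resp.\ $1$) on every $<_\Nb$-consecutive pair when ascending (resp.\ descending). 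Applying $\ads$ then furnishes the desired pseudo-homogeneous set.

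The main obstacle is choosing the insertion rule so that linearity and the pseudo-homogeneity translation hold in tandem. A rule that records every $f(k, n+1)$ risks creating transitivity clashes with earlier placements, while a coarser rule that depends only on $f(n, n+1)$ preserves transitivity trivially but does not constrain $f(h_i, h_{i+1})$ when $h_i$ and $h_{i+1}$ are not $\Nb$-adjacent in the $\ads$-solution. I would address this by placing $n+1$ according to the ordering profile of $\{k \leq n : f(k, n+1) = 0\}$ inside the existing order, and then, given an ascending or descending $\ads$-solution, verify pseudo-homogeneity by reading off, at the insertion stage of the larger element of each $<_\Nb$-consecutive pair, how the $<_L$-relation was set. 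This local-to-global step, combined with $\Sigma^0_1$-induction, should complete the reduction inside $\rca$.
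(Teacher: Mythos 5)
Your forward direction ($\psrt^2_2 \imp \ads$) is correct, and is essentially the argument the paper delegates to Theorem~24 of Murakami, Yamazaki and Yokoyama. The gap is in the converse. You correctly identify that the naive tournament ($x <_L y$ iff $f(x,y)=0$) fails transitivity, and you correctly identify the tension: a fine insertion rule clashes with transitivity while a coarse one forgets too much of $f$. But you never state an actual insertion rule, and ``placing $n+1$ according to the ordering profile of $\{k \le n : f(k,n+1)=0\}$'' is not a rule, it is a description of the constraint you would like the rule to satisfy. The obstruction is concrete: inserting $n+1$ into a linear order on $\{0,\dots,n\}$ means choosing one of at most $n+2$ slots, and that single choice fixes every bit $[k <_L n+1]$ for $k \le n$, so $<_L$ can retain only a vanishing fraction of the profile $(f(0,n+1),\dots,f(n,n+1))$. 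The heart of the proof is deciding exactly which information may be thrown away so that transitivity is gained while pseudo-homogeneity (which only constrains $\Nb$-consecutive pairs of the eventual solution) still transfers back; your sketch stops precisely where that decision has to be made, and ``should complete the reduction'' is not a proof that it can be made.

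The paper's proof supplies this missing decision with a two-step path-closure construction rather than by incremental insertion. First set $g(x,y)=1$ iff there is an increasing chain $x=x_0<\dots<x_l=y$ with $f(x_i,x_{i+1})=1$ for all $i$; concatenating chains makes $g$ semi-transitive, and a $g$-pseudo-homogeneous set of color~$0$ is already $f$-pseudo-homogeneous of color~$0$, while a $g$-pseudo-homogeneous set of color~$1$ becomes $f$-pseudo-homogeneous of color~$1$ after adjoining the interior points of the witnessing chains. The same closure is then applied to $g$: set $h(x,y)=0$ iff there is an increasing chain along which $g$ is constantly $0$. Now semi-transitivity of $g$ is exactly what upgrades $h$ to a linear order: concatenation gives transitivity of $h$ for color~$0$, and if $y$ falls strictly between consecutive nodes $x_i<x_{i+1}$ of a $g$-color-$0$ chain from $x$ to $z$, then one of $g(x_i,y)$, $g(y,x_{i+1})$ must be $0$, which rules out a $3$-cycle. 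Applying $\ads$ to $h$ and pulling the solution back through $g$ and then $f$ finishes the proof. The structural point your proposal misses is that one pass cannot turn an arbitrary $2$-coloring into a linear order, but each path-closure pass buys one level of transitivity while preserving pseudo-homogeneity, so two passes suffice.
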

\begin{proof}
The direction~$\mathsf{psRT}^2_2 \rightarrow \ads$ is Theorem~24 in~\cite{Murakami2014Ramseyan}.
We prove that~$\mathsf{ADS} \rightarrow \mathsf{psRT}^2_2$. 
Let~$f : [\mathbb{N}]^2 \to 2$ be a coloring. The reduction is in two steps.
We first define a $\Delta^{0,f}_1$ semi-transitive coloring
$g : [\mathbb{N}]^2 \to 2$ such that every infinite set pseudo-homogeneous for~$g$
computes an infinite set pseudo-homogeneous for~$f$.
Then, we define a $\Delta^{0,g}_1$ linear order~$h : [\mathbb{N}]^2 \to 2$
such that every infinite set pseudo-homogeneous for~$h$
computes an infinite set pseudo-homogeneous for~$g$.
We conclude by applying~$\ads$ over~$h$.

\emph{Step 1}:
Define the coloring~$g : [\mathbb{N}]^2 \to 2$ for every~$x < y$ by
$g(x, y) = 1$ if there exists a sequence~$x = x_0 < \dots < x_l = y$
such that $f(x_i, x_{i+1}) = 1$ for every~$i < l$, and~$g(x, y) = 0$ otherwise.
The function~$g$ is a semi-transitive coloring. Indeed,
suppose that~$g(x, y) = 1$ and~$g(y, z) = 1$, witnessed respectively
by the sequences~$x = x_0 < \dots < x_m = y$ and~$y = y_0 < \dots < y_n = z$.
The sequence~$x = x_0 < \dots < x_m = y_0 < \dots < y_n = z$ witnesses~$g(x, z) = 1$.
We claim that every infinite set~$H = \{x_0 < x_1 < \dots \}$ pseudo-homogeneous for~$g$ computes
an infinite set pseudo-homogeneous for~$f$. If $H$ is pseudo-homogeneous with color~0,
then $f(x_i, x_{i+1}) = 0$ for each~$i$, otherwise the sequence~$x_i < x_{i+1}$ would witness
$g(x_i, x_{i+1}) = 1$. Thus $H$ is pseudo-homogeneous for~$f$ with color~0. 
If $H$ is pseudo-homogeneous with color~1,
then define the set~$H_1 \supseteq H$ to be the set of integers in the sequences
witnessing~$g(x_i, x_{i+1}) = 1$ for each~$i$. The set~$H_1$ is~$\Delta^{0,f \oplus H}_1$
and pseudo-homogeneous for~$f$ with color~1.

\emph{Step 2}:
Define the coloring~$h : [\mathbb{N}]^2 \to 2$ for every~$x < y$ by
$h(x, y) = 0$ if there exists a sequence~$x = x_0 < \dots < x_l = y$
such that $g(x_i, x_{i+1}) = 0$ for every~$i < l$, and~$h(x, y) = 1$ otherwise.
For the same reasons as for~$g$, $h(x, z) = 0$ whenever~$h(x, y) = 0$ and~$h(y, z) = 0$ for~$x < y < z$.
We need to prove that if~$h(x, z) = 0$ then either $h(x, y) = 0$ or~$h(y, z) = 0$ for~$x < y < z$.
Let~$x = x_0 < \dots < x_l = z$ be a sequence witnessing $h(x,z) = 0$.
If~$y = x_i$ for some~$i < l$ then the sequence~$x = x_0 < \dots < x_i = y$ witnesses~$h(x,y) = 0$.
If~$y \neq x_i$ for every~$i < l$, then there exists some~$i < l$ such that~$x_i < y < x_{i+1}$. 
By semi-transitivity of~$g$, either $g(x_i, y) = 0$ or~$g(y, x_{i+1}) = 0$. In this case either~$x = x_0 < \dots < x_i < y$ witnesses~$h(x,y) = 0$ or~$y < x_{i+1} < \dots < x_l = z$ witnesses~$h(y,z) = 0$.
Therefore~$h$ is a linear order.
For the same reasons as for~$g$, every infinite set pseudo-homogeneous for~$h$
computes an infinite set pseudo-homogeneous for~$g$.
This last step finishes the proof. \qed
\end{proof}

\section{$\ads$ and dependent hyperimmunity}

Lerman, Solomon and Towsner~\cite{Lerman2013Separating} separated the ascending descending
sequence principle from a stable version of~$\cac$ by using a very involved iterated forcing argument.
According to our previous simplification of their general framework~\cite{Patey2015Iterativea},
we reformulate their proof in terms of preservation of dependent hyperimmunity,
and extend it to pseudo Ramsey's theorem for pairs.

\begin{definition}[Dependent hyperimmunity]
A formula~$\varphi(U, V)$ is \emph{essential}
if for every~$x \in \omega$, there is a finite set~$R > x$ such that for every~$y \in \omega$,
there is a finite set~$S > y$ such that~$\varphi(R, S)$ holds.
A pair of sets~$A_0, A_1 \subseteq \omega$ is \emph{dependently $X$-hyperimmune} 
if for every essential $\Sigma^{0,X}_1$ formula~$\varphi(U, V)$,
$\varphi(R, S)$ holds for some~$R \subseteq \overline{A}_0$ and~$S \subseteq \overline{A}_1$.
\end{definition}

In particular, if the pair $A_0, A_1$ is dependently hyperimmune,
then $A_0$ and~$A_1$ are both hyperimmune.

\begin{definition}[Preservation of dependent hyperimmunity]
A $\Pi^1_2$ statement~$\Psf$ admits \emph{preservation of dependent hyperimmunity} if
for every set $Z$, every pair of dependently $Z$-hyperimmune sets~$A_0, A_1 \subseteq \omega$
and every $\Psf$-instance~$X \leq_T Z$, there is a solution~$Y$ to~$X$ such that $A_0, A_1$ are dependently 
$Y \oplus Z$-hyperimmune.
\end{definition}

A partial order $(P, \leq_P)$ is \emph{stable} if either $(\forall i \in P)(\exists s)[(\forall j > s)(j \in P \imp i \leq_P j) \orr (\forall j > s)(j \in P \imp i \mid_P j)]$ or $(\forall i \in P)(\exists s)[(\forall j > s)(j \in P \imp i \geq_P j) \orr (\forall j > s)(j \in P \imp i \mid_P j)]$. $\scac$ is the restriction of $\cac$ to stable partial orders.
A simple finite injury priority argument shows that~$\scac$ does not admit preservation of dependent hyperimmunity.

\begin{theorem}\label{thm:stable-semi-transitive-fairness}
There exists a computable, stable semi-transitive coloring~$f : [\omega]^2 \to 2$
such that the pair~$\overline{A}_0, \overline{A}_1$ is 
dep.~hyperimmune, where
$A_i = \{ x : \lim_s f(x, s) = i \}$.
\end{theorem}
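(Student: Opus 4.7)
The plan is to present the coloring implicitly through a single c.e.\ set $W$, setting $f(x,s) = 1$ if and only if $x \in W_s$, the stage-$s$ approximation of $W$. This presentation immediately delivers three of the four required properties: $f$ is computable because $W$ is c.e.; stability holds because $\lim_s f(x,s)$ is $1$ when $x \in W$ and $0$ otherwise; and semi-transitivity follows from the monotonicity of the approximation, since if $f(x,y) = f(y,z) = 1$ for $x < y < z$ then $x \in W_y \subseteq W_z$, giving $f(x,z) = 1$. Moreover $A_1 = W$ and $A_0 = \overline{W}$, so the dependent hyperimmunity of the pair $\overline{A}_0, \overline{A}_1$ reduces to satisfying, for each $e$, the requirement $\Rcal_e$: if the $e$th $\Sigma^0_1$ formula $\varphi_e(U,V)$ is essential, then there exist finite sets $R_e \subseteq W$ and $S_e \subseteq \overline{W}$ with $\varphi_e(R_e, S_e)$.

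I would satisfy the $\Rcal_e$ by a standard finite injury priority argument. Each strategy $\Rcal_e$, assigned priority $e$, maintains a threshold $n_e$ chosen larger than every element currently committed by strategies of higher priority, and at each stage searches for a pair of finite sets $R < S$ lying strictly above $n_e$ with $\varphi_e(R,S)$ enumerated. Upon finding such a pair it enumerates $R$ into $W$ and declares $S$ protected. A strategy $\Rcal_e$ is injured if some higher-priority $\Rcal_{e'}$ subsequently enumerates an element of $S_e$ into $W$; in that case $\Rcal_e$ discards its commitment and restarts with a larger threshold. Crucially, because $n_e$ exceeds the maximum of every protected set of every higher-priority strategy, any element enumerated by $\Rcal_e$ itself lies above those protected sets, so $\Rcal_e$ never injures a higher-priority promise, and injury propagates only down the priority list.

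For the verification, one argues by induction on $e$ that $\Rcal_e$ acts only finitely often, so after some stage the threshold settles at a value $n_e^*$. Assuming $\varphi_e$ is essential, invoking essentiality at $x = n_e^*$ yields a finite set $R > n_e^*$ such that for every $y$ some $S > y$ satisfies $\varphi_e(R,S)$; invoking it again at $y = \max(R)$ produces a companion $S > \max(R)$, giving a disjoint pair $(R,S)$ above $n_e^*$ that the search is guaranteed to find. The main obstacle I anticipate is purely bookkeeping: controlling how $n_e$ absorbs every protected set of every higher priority, and confirming that each newly committed $R_e$ is disjoint from all of them, so that the claim ``no strategy ever injures a higher-priority promise'' really holds under repeated restarts. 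Once that is handled, the construction yields the desired $f$.
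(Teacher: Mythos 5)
Your representation of $f$ through a single c.e.\ set $W$, with $f(x,s)=1$ iff $x \in W_s$, is elegant for getting semi-transitivity and stability for free, but it runs into a fundamental obstruction before the priority argument even starts. With this choice you get $A_1 = W$ and $A_0 = \overline{W}$, so $\overline{A}_0 = W$ is c.e. An infinite c.e.\ set is never hyperimmune: enumerating $W = \{a_0, a_1, \dots\}$ and taking the singleton array $F_i = \{a_i\}$ gives a computable array tracing $W$. Since dependent hyperimmunity of a pair implies ordinary hyperimmunity of both components (as noted right after the paper's definition of dependent hyperimmunity), the pair $(\overline{A}_0, \overline{A}_1)$ can never be dependently hyperimmune in your setup, no matter how the requirements $\Rcal_e$ are arranged. (The only escape would be $W$ finite, but then $\overline{A}_1 = \overline{W}$ is cofinite and again trivially traced.) This is precisely why the paper does not use a monotone one-sided approximation: its movable-marker construction shifts whole intervals of elements back and forth between $A_0$ and $A_1$, producing sets that are genuinely $\Delta^0_2$ rather than c.e.\ or co-c.e., and it pays for this by having to establish semi-transitivity through a careful interval-preservation claim instead of getting it from monotonicity.

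A secondary remark: even setting aside the obstruction above, your requirement has the colors swapped. Unwinding the definition, dependent hyperimmunity of $(\overline{A}_0, \overline{A}_1)$ asks for $R \subseteq A_0 = \overline{W}$ and $S \subseteq A_1 = W$, whereas you propose to enumerate $R$ into $W$ and protect $S$ from it; the roles would have to be reversed. But the main issue is the structural one, and no amount of priority bookkeeping can repair it: the monotone c.e.\ presentation must be abandoned in favor of a two-sided $\Delta^0_2$ approximation.
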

\begin{proof}
Fix an enumeration~$\varphi_0(U,V), \varphi_1(U,V), \dots$ of all $\Sigma^0_1$ formulas.
The construction of the function~$f$ is done by a finite injury priority argument
with a movable marker procedure. We want to satisfy the following scheme of requirements for each~$e$,
where~$A_i = \{ x : \lim_s f(x, s) = i \}$:
\[
\Rcal_e : \varphi_e(U,V) \mbox{ essential} \imp 
	(\exists R \subseteq_{fin} A_0)(\exists S \subseteq_{fin} A_1)\varphi_e(R,S)
\]

The requirements are given the usual priority ordering.
We proceed by stages, maintaining two sets $A_0, A_1$
which represent the limit of the function~$f$.
At stage 0, $A_{0,0} = A_{1,0} = \emptyset$ and $f$ is nowhere defined. 
Moreover, each requirement~$\Rcal_e$ is given a movable marker~$m_e$ initialized to~0.

A strategy for~$\Rcal_e$ \emph{requires attention at stage~$s+1$}
if $\varphi_e(R, S)$ holds for some~$R < S \subseteq (m_e, s]$.
The strategy sets~$A_{0,s+1} = (A_{0,s} \setminus (m_e, min(S)) \cup [min(S), s]$ 
and~$A_{1,s+1} = (A_{1,s} \setminus [min(S), s]) \cup (m_e, min(S))$.
Note that~$R \subseteq (m_e, min(S))$ since~$R < S$.
Then it is declared~\emph{satisfied} and does not act until some strategy of higher priority changes its marker. 
Each marker~$m_{e'}$ of strategies of lower priorities is assigned the value~$s+1$.

At stage~$s+1$, assume that~$A_{0,s} \cup A_{1,s} = [0,s)$
and that~$f$ is defined for each pair over~$[0,s)$.
For each~$x \in [0,s)$, set~$f(x,s) = i$ for the unique~$i$ such that~$x \in A_{i,s}$.
If some strategy requires attention at stage~$s+1$, take the least one
and satisfy it.
If no such requirement is found, set~$A_{0,s+1} = A_{0,s} \cup \{s\}$
and~$A_{1,s+1} = A_{1,s}$. Then go to the next stage. This ends the construction.

Each time a strategy acts, it changes the markers of strategies of lower priority, and is declared satisfied.
Once a strategy is satisfied, only a strategy of higher priority can injure it.
Therefore, each strategy acts finitely often and the markers stabilize.
It follows that the $A$'s also stabilize and that~$f$ is a stable function.

\begin{claim}
For every~$x < y < z$, $f(x,y) = 1 \wedge f(y,z) = 1 \imp f(x,z) = 1$
\end{claim}
\begin{proof*}
Suppose that $f(x,y) = 1$ and~$f(y,z) = 1$ but $f(x,z) = 0$. 
By construction of~$f$, $x \in A_{0,z}$, $x \in A_{1,y}$ and~$y \in A_{1,z}$.
Let~$s \leq z$ be the last stage such that~$x \in A_{1,s}$.
Then at stage~$s+1$, some strategy~$\Rcal_e$ receives attention
and moves~$x$ to~$A_{0,s+1}$ and therefore moves~$[x, s]$ to~$A_{0,s+1}$.
In particular~$y \in A_{0,s+1}$ since~$y \in [x,s]$.
Moreover, the strategies of lower priority have had their marker moved to~$s+1$
and therefore will never move any element below~$s$.
Since $f(y, z) = 1$, then $y \in A_{1,z}$.
In particular, some strategy~$\Rcal_i$ of higher priority moved~$y$ to~$A_{1,t+1}$
at stage~$t +1$ for some~$t \in (s, z)$. Since~$\Rcal_i$ has a higher priority, $m_i \leq m_e$, 
and since~$y$ is moved to~$A_{1,t+1}$, then so is~$[m_i, y]$, and in particular
$x \in A_{1,t+1}$ since~$m_i \leq m_e \leq x \leq y$. This contradicts the maximality of~$s$.
\end{proof*}

\begin{claim}
For every~$e \in \omega$, $\Rcal_e$ is satisfied.
\end{claim}
\begin{proof*}
By induction over the priority order. Let~$s_0$ be a stage after which
no strategy of higher priority will ever act. By construction, $m_e$ will not change after stage~$s_0$.
If $\varphi_e(U,V)$ is essential, then $\varphi_e(R,S)$ holds for two sets~ $m_e < R < S$.
Let~$s = 1+max(s_0, S)$. The strategy~$\Rcal_e$ will require attention at some stage before~$s$,
will receive attention, be satisfied and never be injured.
\end{proof*}

This last claim finishes the proof. \qed
\end{proof}

\begin{corollary}\label{cor:scac-not-preservation-of-dependent-hyperimmunity}
$\scac$ does not admit preservation of dependent hyperimmunity.
\end{corollary}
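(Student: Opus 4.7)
The plan is to leverage the stable semi-transitive coloring $f$ from Theorem~\ref{thm:stable-semi-transitive-fairness} as a building block for a $\scac$-instance whose solutions necessarily break dependent hyperimmunity of the pair $\overline{A}_0,\overline{A}_1$. Following the Hirschfeldt--Shore equivalence between $\cac$ and its semi-transitive-coloring formulation (invoked in Section~\ref{sect:cac-cb-immunity}), I would encode $f$ as a computable partial order $P$ by setting $x <_P y$ iff $x <_\omega y$ and $f(x,y)=1$. Semi-transitivity of $f$ makes $<_P$ transitive, and stability of $f$ translates into upward stability of $P$, so $P$ is a bona fide $\scac$-instance whose infinite chains are exactly the infinite $f$-homogeneous sets of color $1$ and whose infinite antichains are those of color $0$.

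Next I would record the simple but crucial observation that any infinite $f$-homogeneous set $H$ of color $i$ actually satisfies $H \subseteq A_i$: for each $x \in H$, stability of $f$ forces $\lim_s f(x,s)=i$ because $f(x,y)=i$ holds for infinitely many $y$, namely all $y \in H$ above $x$.

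With this in hand, I would exhibit an essential $\Sigma^{0,H}_1$ formula certifying that the pair $\overline{A}_0,\overline{A}_1$ is no longer dependently $H$-hyperimmune. In the case $H \subseteq A_0 = \overline{A}_1$, take
$$
\varphi(U,V) \;\equiv\; U \neq \emptyset \wedge U \subseteq H \wedge U < V.
$$
Essentiality is immediate: given $x$, pick any $h \in H$ with $h>x$ and set $R=\{h\}$; then for any $y$, any finite nonempty $S$ above $\max(h,y)$ works. But no pair $(R,S)$ with $R \subseteq \overline{A}_0 = A_1$ can satisfy $\varphi$, since such an $R$ would be a nonempty subset of $H \cap A_1 = \emptyset$. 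The case $H \subseteq A_1$ is handled symmetrically by swapping the roles of $U$ and $V$. Since $\overline{A}_0,\overline{A}_1$ is dependently hyperimmune by Theorem~\ref{thm:stable-semi-transitive-fairness} but no solution to the computable $\scac$-instance $P$ preserves this, $\scac$ does not admit preservation of dependent hyperimmunity.

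The substance of the argument already lives inside Theorem~\ref{thm:stable-semi-transitive-fairness}, whose finite-injury construction is what makes the complements of the eventual color classes dependently hyperimmune in the first place. Here the only points requiring care are keeping straight which color of homogeneity lands in which of $A_0$ or $A_1$ (hence which side of $\varphi$ carries the ``$\subseteq H$'' clause), and the routine verification that the conversion $f \mapsto P$ produces a stable partial order in the upward sense demanded by the $\scac$ definition.
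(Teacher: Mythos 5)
Your overall strategy matches the paper's: take the coloring $f$ from Theorem~\ref{thm:stable-semi-transitive-fairness}, turn it into a computable stable partial order via the Hirschfeldt--Shore correspondence, observe that any homogeneous $H$ lies entirely inside one of $A_0$, $A_1$, and exhibit an essential $\Sigma^{0,H}_1$ formula that has no witnesses of the required kind. Your explicit encoding $x <_P y$ iff $x <_\omega y$ and $f(x,y)=1$ is correct and more concrete than the paper's appeal to the equivalence of $\scac$ with stable semi-transitive colorings, and your observation that a color-$i$ homogeneous set is contained in $A_i$ (by stability of $f$) is exactly the fact the paper uses.

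However, your choice of formula is misaligned with the definition of dependent hyperimmunity. The pair under discussion is $\overline{A}_0, \overline{A}_1$; unfolding the definition, that pair being dependently $H$-hyperimmune means that every essential formula has witnesses $R \subseteq \overline{(\overline{A}_0)} = A_0$ and $S \subseteq \overline{(\overline{A}_1)} = A_1$. You instead look for $R \subseteq \overline{A}_0 = A_1$, dropping one layer of complementation. As a result, in the case $H \subseteq A_0$ your formula $\varphi(U,V) = U \neq \emptyset \wedge U \subseteq H \wedge U < V$ \emph{is} satisfied by admissible witnesses: any nonempty finite $R \subseteq H \subseteq A_0$ together with any finite $S \subseteq A_1$ with $R < S$ does the job (and $A_1$ is infinite, being the complement of the hyperimmune set $\overline{A}_1$). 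So your $\varphi$ does not in fact refute dependent $H$-hyperimmunity. The fix is to put the ``nonempty subset of $H$'' clause on the other variable: when $H \subseteq A_0$, use $\varphi(U,V) = V \neq \emptyset \wedge V \subseteq H \wedge U < V$, so any purported $S \subseteq A_1$ would be a nonempty subset of $H \cap A_1 = \emptyset$; swap symmetrically when $H \subseteq A_1$. The paper avoids this pitfall entirely by taking the symmetric formula $U \neq \emptyset \wedge V \neq \emptyset \wedge U \cup V \subseteq H$, which kills both cases at once and needs no case split on the color of $H$.
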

\begin{proof}
Let~$f : [\omega]^2 \to 2$ be the coloring of Theorem~\ref{thm:stable-semi-transitive-fairness}.
By construction, the pair $\overline{A}_0$, $\overline{A}_1$ is dependently hyperimmune, 
where~$A_i = \{ x : \lim_s f(x, s) = i \}$.
Let~$H$ be an infinite $f$-homogeneous set. In particular, $H \subseteq A_0$ or~$H \subseteq A_1$.
We claim that the pair~$\overline{A}_0, \overline{A}_1$ is not dependently~$H$-hyperimmune.
The $\Sigma^{0,H}_1$ formula~$\varphi(U, V)$ defined by~$U \neq \emptyset \wedge V \neq \emptyset \wedge U \cup V \subseteq H$
is essential since~$H$ is infinite. However, if there is some~$R \subseteq A_1$ and~$S \subseteq A_0$
such that~$\varphi(R, S)$ holds, then~$H \cap A_0 \neq \emptyset$ and~$H \cap A_1 \neq \emptyset$,
contradicting the choice of~$H$. Therefore $\overline{A}_0, \overline{A}_1$ is not dependently~$H$-hyperimmune.
Hirschfeldt and Shore~\cite{Hirschfeldt2007Combinatorial} proved that
$\scac$ is equivalent to stable semi-transitive Ramsey's theorem for pairs
over~$\rca$. Therefore~$\scac$ does not admit preservation of dependent hyperimmunity. \qed
\end{proof}

We will now prove the positive preservation result.

\begin{theorem}\label{thm:psrt2k-preservation-dependent-hyperimmunity}
For every~$k \geq 2$, $\psrt^2_k$ admits preservation of dep.~hyperimmunity.
\end{theorem}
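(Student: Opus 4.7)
The plan is to adapt the $k$-branched Mathias forcing used in the proof of Theorem~\ref{thm:cac-preservation-cbimmunity}, replacing c.b-immunity by dependent hyperimmunity and exploiting the closure of essential $\Sigma^{0,Z}_1$ formulas under finite disjunction in order to reduce the $k$-ary disjunctive requirement to a single application of the hypothesis.

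Conditions will be tuples $c = (F_0, \ldots, F_{k-1}, X)$ where $X$ is an infinite $Z$-computable reservoir and each $F_i$ is a finite set pseudo-homogeneous for color $i$, with $\max F_i < \min X$ and $f(\max F_i, x) = i$ for every $x \in X$ whenever $F_i \neq \emptyset$, so that any element of $X$ may be added to any non-empty $F_i$ while preserving pseudo-homogeneity. A $k$-tuple $(G_0, \ldots, G_{k-1})$ satisfies $c$ if for each $i$, $F_i \subseteq G_i$, $G_i \setminus F_i \subseteq X$, and $G_i$ is pseudo-homogeneous for color $i$. For each $\vec{e} = (e_0, \ldots, e_{k-1}) \in \omega^k$, the disjunctive requirement to be forced is
\[
\Rcal_{\vec{e}} : \bigvee_{i < k} \Rcal^{G_i}_{e_i},
\]
where $\Rcal^G_e$ asserts that the $\Sigma^{0, G \oplus Z}_1$ formula coded by $\Phi_e$ is either not essential or admits $R \subseteq \overline{A}_0, S \subseteq \overline{A}_1$ witnessing it. Meeting all $\Rcal_{\vec{e}}$'s then yields, by the diagonal argument of Theorem~\ref{thm:cac-preservation-cbimmunity}, some $i$ for which $A_0, A_1$ remains dependently $G_i \oplus Z$-hyperimmune.

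The central step is the requirement lemma. Given $c$ and $\vec{e}$, for each $i < k$ let
\[
\psi_i(R, S) \equiv (\exists \mbox{ finite } E_i \subseteq X)\bigl[F_i \cup E_i \mbox{ pseudo-hom.~for } i \wedge \Phi^{(F_i \cup E_i) \oplus Z}_{e_i}(R, S)\!\downarrow\bigr],
\]
which is $\Sigma^{0,Z}_1$, and set $\psi = \bigvee_{i < k} \psi_i$. If $\psi$ is not essential, then no $\psi_i$ is; since for every $G_i$ compatible with $c$ the implication $\varphi^{G_i \oplus Z}_{e_i}(R, S) \Rightarrow \psi_i(R, S)$ holds (extract an initial segment $F_i \cup E_i$ of $G_i$ with $E_i \subseteq X$ witnessing the computation via the use principle, and note that initial segments of pseudo-homogeneous sets are pseudo-homogeneous), the condition $c$ already forces every $\Rcal^{G_i}_{e_i}$. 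Otherwise $\psi$ is essential, so dependent $Z$-hyperimmunity of $(A_0, A_1)$ supplies $R \subseteq \overline{A}_0, S \subseteq \overline{A}_1$ with $\psi_i(R, S)$ for some $i$, witnessed by some finite $E_i$; I extend $c$ by moving $E_i$ into $F_i$ and restricting the reservoir to $Y = \{y \in X : y > \max E_i \wedge f(\max E_i, y) = i\}$, which forces $\varphi^{G_i \oplus Z}_{e_i}(R, S)$.

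The hard part will be ensuring extendability: that each $F_i$ can be grown unboundedly, so that every $G_i$ ends up infinite and the diagonal argument picks an infinite preserving candidate, and that the restricted reservoir $Y$ in the essential case stays infinite. Both obstructions are handled by assuming toward contradiction that no infinite $f$-pseudo-homogeneous set preserves dependent hyperimmunity of $(A_0, A_1)$: any failure of extendability for $F_i$ within the reservoir would make some infinite $Z$-computable subset anti-color-$i$, and from such a reservoir a greedy $Z$-computable construction combined with a cohesiveness-style analysis on the residual colors yields an infinite $f$-pseudo-homogeneous set for some $j \neq i$; but such a set is $Z$-computable, hence trivially preserves dependent $Z$-hyperimmunity, contradicting the assumption. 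Once extendability is secured, genericity delivers an infinite $f$-pseudo-homogeneous $G_i$ witnessing preservation of dependent hyperimmunity for $(A_0, A_1)$.
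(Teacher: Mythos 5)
Your plan fails at the crucial point of the requirement lemma, and the escape hatch you propose for the failure does not work.

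The issue is with $\psi = \bigvee_{i<k}\psi_i$. When $\psi$ is essential, dependent hyperimmunity hands you a pair $R \subseteq \overline{A}_0$, $S \subseteq \overline{A}_1$ and \emph{one} color $i$ with $\psi_i(R,S)$, witnessed by some $E_i \subseteq X$. To extend the condition you must restrict the reservoir to $Y = \{y \in X : y > \max E_i \wedge f(\max E_i, y) = i\}$, and you have no control over which $i$ the hyperimmunity gives you, so $Y$ may well be finite. You acknowledge this, but the repair is unsound: finiteness of $Y$ for this single $\max E_i$ does not say anything about the behavior of $f$ on a tail of $X$, so it does not produce an infinite subset of $X$ on which a color is avoided, and even if it did, your claim that a ``greedy $Z$-computable construction'' would then yield a $Z$-computable pseudo-homogeneous set is false once $k \geq 3$ (avoiding one color still leaves a nontrivial $(k-1)$-coloring). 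The paper handles the latter problem by an induction on $k$: if some infinite subset $H$ of the reservoir avoids a color, recurse with $k-1$ colors, using the induction hypothesis to get a pseudo-homogeneous set preserving dependent hyperimmunity; this is not the trivial preservation you assert.

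The paper's fix for the main problem is to replace your disjunction by a single $\Sigma^{0,Z}_1$ formula $\psi(U,V)$ that asserts the existence of a $k$-tuple $E_0,\dots,E_{k-1} \subseteq X$ together with a common anchor $z \in X$ lying above every $E_i$, such that $F_i \cup E_i \cup \{z\}$ is pseudo-homogeneous for color $i$ \emph{for every} $i < k$, and such that $\varphi_{e_i}(F_i \cup E_i, U_i, V_i)$ holds with $U_i \subseteq U$, $V_i \subseteq V$ for every $i$. Essentiality of $\psi$ is established by the assumption that $c$ does not already force $\Rcal_{\vec e}$, exploiting the notion of ``satisfying $c$''. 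Once $\psi(R,S)$ holds, the anchor $z$ supplies the pigeonhole you are missing: the $k$ sets $\{w \in X : f(z,w) = i\}$ partition a tail of $X$, so one of them is infinite; you then extend $F_i \cup E_i \cup \{z\}$ for \emph{that} $i$, and the fact that $\psi$ records a witness for \emph{every} color means the corresponding $\varphi_{e_i}(F_i \cup E_i, U_i, V_i)$ is available with $U_i \subseteq \overline{A}_0$, $V_i \subseteq \overline{A}_1$. In short, the reduction to ``a single application of the hypothesis'' is achieved by a conjunctive formula with a shared anchor, not by closure of essential formulas under disjunction; the disjunctive formulation loses exactly the freedom to choose which branch to extend.
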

\begin{proof}
The proof is done by induction over~$k \geq 2$.
Fix a pair of sets~$A_0, A_1 \subseteq \omega$ dependently $Z$-hyperimmune for some set~$Z$.
Let~$f : [\omega]^2 \to k$ be a $Z$-computable coloring
and suppose that there is no infinite set~$H$ over which~$f$ avoids at least one color, and such that
the pair $A_0, A_1$ is dependently $H \oplus Z$-hyperimmune,
as otherwise, we are done by induction hypothesis.
We will build $k$ infinite sets~$G_0, \dots, G_{k-1}$
such that $G_i$ is pseudo-homogeneous for~$f$ with color~$i$ for each~$i < k$
and such that $A_0, A_1$ is dependently $G_i \oplus Z$-hyperimmune for some~$i < k$.
The sets~$G_0, \dots, G_{k-1}$ are built by a variant of Mathias forcing
$(F_0, \dots, F_{k-1}, X)$ such that
\begin{itemize}
	\item[(i)] $F_i \cup \{x\}$ is pseudo-homogeneous for~$f$ with color~$i$ for each~$x \in X$
	\item[(ii)] $X$ is an infinite set such that $A_0, A_1$ is dependently $X \oplus Z$-hyperimmune
\end{itemize}
A condition~$d = (H_0, \dots, H_{k-1}, Y)$ \emph{extends} $c = (F_0, \dots, F_{k-1}, X)$
(written $d \leq c$) if~$(H_i, Y)$ Mathias extends $(F_i, X)$ for each~$i < k$.
A tuple of sets~$G_0, \dots, G_{k-1}$ \emph{satisfies} $c$ if
for every~$n \in \omega$, there is an extension~$d = (H_0, \dots, H_{k-1}, Y)$ of~$c$
such that~$G_i \uh n \subseteq H_i$ for each~$i < k$.
Informally, $G_0, \dots, G_{k-1}$ satisfy~$c$ if the sets are generated by a decreasing
sequence of conditions extending~$c$.
In particular, $G_i$ is pseudo-homogeneous for~$f$ with color~$i$
and satisfies the Mathias condition~$(F_i, X)$. 
The first lemma shows that every sufficiently generic filter yields a $k$-tuple of infinite sets.

\begin{lemma}\label{lem:psrt-cac-fairness-force-Q}
For every condition~$c = (F_0, \dots, F_{k-1}, X)$ and every~$i < k$,
there is an extension~$d = (H_0, \dots, H_{k-1}, Y)$ of~$c$ such that~$|H_i| > |F_i|$.
\end{lemma}
\begin{proof}
Fix~$c$ and $i < k$. If for every $x \in X$ and for all but finitely many $y \in X$,
$f(x, y) \neq i$, then we could $X$-computably thin out the set $X$
to obtain an infinite set $H$ over which $f$ avoids at least one color, contradicting our initial assumption.
Therefore there must be some $x \in X$ such that the set $Y = \{ y \in X : y > x \wedge f(x, y) = i \}$ is infinite.
The condition $d = (F_0, \dots, F_{i-1}, F_i \cup \{x\}, F_{i+1}, \dots, F_{k-1}, Y)$ is the desired extension of~$c$.
\end{proof}

Fix an enumeration~$\varphi_0(G, U,V), \varphi_1(G, U,V), \dots$ of all $\Sigma^{0,Z}_1$ formulas.
We want to satisfy the following requirements for each~$e_0, \dots, e_{k-1} \in \omega$:
$$
\Rcal_{\vec{e}} : \hspace{10pt} \Rcal^{G_0}_{e_0} \hspace{10pt} 
		\vee \hspace{10pt} \dots \hspace{10pt} \vee \hspace{10pt} \Rcal^{G_{k-1}}_{e_{k-1}}
$$
where $\Rcal_e^G$ is the requirement ``$\varphi_e(G, U, V)$ essential $\imp \varphi_e(G, R, S)$ 
for some~$R \subseteq \overline{A}_0$ and~$S \subseteq \overline{A}_1$''.
We say that a condition~$c$ \emph{forces} $\Rcal_{\vec{e}}$ if $\Rcal_{\vec{e}}$  holds
for every $k$-tuple of sets satisfying~$c$. Note that the notion of satisfaction 
has a precise meaning given above.

\begin{lemma}\label{lem:psrt-cac-fairness-force-R}
For every condition~$c$ and every $k$-tuple of indices~$e_0, \dots, e_{k-1} \in \omega$,
there is an extension~$d$ of~$c$ forcing~$\Rcal_{\vec{e}}$.
\end{lemma}
\begin{proof*}
Fix a condition~$c = (F_0, \dots, F_{k-1}, X)$. 
Let~$\psi(U, V)$ be the~$\Sigma^{0,X \oplus Z}_1$ formula which holds if
there is a $k$-tuple of sets~$E_0, \dots, E_{k-1} \subseteq X$ and a $z \in X$ such that for each~$i < k$,
\begin{itemize}
	\item[(i)] $z > max(E_i)$
	\item[(ii)] $F_i \cup E_i \cup \{z\}$ is pseudo-homogeneous for color~$i$.
	\item[(iii)] $\varphi_{e_i}(F_i \cup E_i, U_i, V_i)$ holds for some~$U_i \subseteq U$ and~$V_i \subseteq V$
\end{itemize}
Suppose that~$c$ does not force~$\Rcal_{\vec{e}}$, otherwise we are done.

We claim that~$\psi$ is essential. 
Since~$c$ does not force~$\Rcal_{\vec{e}}$, there is a $k$-tuple of infinite sets~$G_0, \dots, G_{k-1}$
satisfying~$c$ and such that $\varphi_{e_i}(G_i, U, V)$ is essential for each~$i < k$.
Fix some $x \in \omega$. By definition of being essential, there are some finite sets
$R_0, \dots, R_{k-1} > x$ such that for every~$y \in \omega$, there are finite sets~$
S_0, \dots, S_{k-1} > y$ such that~$\varphi_{e_i}(G_i, R_i, S_i)$ holds for each~$i < k$.
Let~$R = \bigcup R_i$ and fix some~$y \in \omega$. There are 
finite sets~$S_0, \dots, S_{k-1} > y$ such that~$\varphi_{e_i}(G_i, R_i, S_i)$ holds for each~$i < k$.
Let~$S = \bigcup S_i$. By continuity, there are finite sets~$E_0, \dots, E_{k-1}$ such that
$G_i \uh max(E_i) = F_i \cup E_i$ and~$\varphi_{e_i}(F_i \cup E_i, R_i, S_i)$ holds for each~$i < k$.
By our precise definition of satisfaction, we can even assume without loss of generality
that $(F_0 \cup E_0, \dots, F_{k-1} \cup E_{k-1}, Y)$ is a valid extension of~$c$ for some infinite set~$Y \subseteq X$.
Let~$z \in Y$. In particular, by the definition of being a condition extending~$c$, 
$z \in X$,  $z > max(E_0, \dots, E_{k-1})$
and $F_i \cup E_i \cup \{z\}$ is pseudo-homogeneous for color~$i$ for each~$i < k$.
Therefore~$\psi(R, S)$ holds, as witnessed by~$E_0, \dots, E_{k-1}$ and~$z$.
Thus $\psi(R, S)$ is essential.

Since $A_0, A_1$ is dependently $X \oplus Z$-hyperimmune, then $\psi(R, S)$ holds for some~$R \subseteq \overline{A}_0$
and some~$S \subseteq \overline{A}_1$. Let~$E_0, \dots, E_{k-1} \subseteq X$
be the $k$-tuple of sets and~$z \in X$ be the integer witnessing $\psi(R,S)$.
Let~$i < k$ be such that the set~$Y = \{w \in X \setminus [0, max(E_i)] : f(z, w) = i\}$ is infinite.
The condition~$d = (F_0, \dots, F_{i-1}, F_i \cup E_i \cup \{z\}, F_{i+1}, \dots, F_{k-1}, Y)$ is
a valid extension of~$c$ forcing~$\Rcal_{\vec{e}}$.
\end{proof*}

Let~$\Fcal = \{c_0, c_1, \dots\}$ be a sufficiently generic filter containing $(\emptyset, \dots, \emptyset, \omega)$,
where $c_s = (F_{0,s}, \dots, F_{k-1,s}, X_s)$. The filter~$\Fcal$ yields a
$k$-tuple of sets~$G_0, \dots, G_{k-1}$ defined by~$G_i = \bigcup_s F_{i,s}$.
By construction, $G_0, \dots, G_{k-1}$ satisfies every condition in~$\Fcal$.
By Lemma~\ref{lem:psrt-cac-fairness-force-Q}, the set~$G_i$ is infinite for each~$i < k$
and by Lemma~\ref{lem:psrt-cac-fairness-force-R}, the pair $A_0, A_1$ is dependently $G_i \oplus Z$-hyperimmune
for some~$i < k$. \qed
\end{proof}

\begin{theorem}\label{thm:randomness-dependent-hyperimmunity}
Fix some set~$Z$ and a pair of sets~$A_0, A_1$ dependently $Z$-hyperimmune.
If~$Y$ is sufficiently random relative to~$Z$, then the pair $A_0, A_1$ is dependently $Y \oplus Z$-hyperimmune.
\end{theorem}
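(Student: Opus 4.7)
The plan is to show that for each index $e$ in a uniform enumeration $\{\varphi_e^Y(U,V)\}_{e \in \omega}$ of $\Sigma^{0,Y\oplus Z}_1$ formulas, the class
\[
B_e = \{Y : \varphi_e^Y \text{ is essential and has no witness } (R,S) \text{ with } R \subseteq \overline{A}_0,\ S \subseteq \overline{A}_1 \}
\]
has Lebesgue measure zero. Since $\{B_e\}_{e \in \omega}$ is uniformly captured by $Z$-definable tests of the appropriate level, any $Y$ sufficiently random relative to $Z$ avoids every $B_e$, which is precisely the conclusion of the theorem.

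Fix $e$ and suppose toward contradiction that $\mu(B_e) > 0$. Let $\mathcal{U}_{U,V} = \{Y : \varphi_e^Y(U,V)\}$: this is a $\Sigma^{0,Z}_1$ class uniformly in $(U,V)$, whose measure is $Z$-left-c.e. By the effective Lebesgue density theorem, pick a string $\sigma$ with $\mu(B_e \cap [\sigma]) > (1-\epsilon)\mu([\sigma])$ for a small rational $\epsilon > 0$. I will define a $\Sigma^{0,Z}_1$ formula $\psi(U,V)$ that is essential and admits no witness in $\overline{A}_0 \times \overline{A}_1$, contradicting dependent $Z$-hyperimmunity of $(A_0, A_1)$. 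The natural first attempt is $\psi(U,V) \iff \mu(\mathcal{U}_{U,V} \cap [\sigma]) > \epsilon\, \mu([\sigma])$.

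The no-witness half is immediate from inclusion--exclusion: if $\psi(R,S)$ holds with $R \subseteq \overline{A}_0, S \subseteq \overline{A}_1$, then $\mu(\mathcal{U}_{R,S} \cap B_e \cap [\sigma]) \geq \mu(\mathcal{U}_{R,S} \cap [\sigma]) - \mu([\sigma] \setminus B_e) > 0$, yielding some $Y \in B_e$ with $\varphi_e^Y(R,S)$, contradicting $Y \in B_e$. For essentiality, fix $x$: each $Y \in B_e \cap [\sigma]$ has some canonical witness $R_Y > x$ to the essentiality of $\varphi_e^Y$ at $x$, and countable pigeonhole yields $R^* > x$ selected by a positive-measure subset $G \subseteq B_e \cap [\sigma]$. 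Then $G \subseteq \bigcup_{S > y} \mathcal{U}_{R^*, S}$ for every $y$, so this $\Sigma^{0,Z}_1$ union has measure at least $\mu(G)$, and by countable additivity some $S > y$ gives $\mu(\mathcal{U}_{R^*,S} \cap [\sigma]) > 0$.

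The main obstacle is that $\mu(G)$ may be strictly less than $\epsilon \mu([\sigma])$, so the single $S$ supplied by countable additivity need not satisfy the fixed threshold of $\psi$. I would handle this by applying the Lebesgue density theorem a second time to $G$, passing to a sub-cylinder $[\tau] \subseteq [\sigma]$ on which $G$ has density near $1$, whence $\bigcup_{S > y} \mathcal{U}_{R^*, S}$ has near-full measure on $[\tau]$, and some individual $\mathcal{U}_{R^*,S}$ meets the required density there. To keep $\psi$ uniformly defined I would replace the fixed threshold by the $\Sigma^{0,Z}_1$ condition $(\exists \tau \succeq \sigma)\bigl(\mu(\mathcal{U}_{U,V} \cap [\tau]) > (1-\delta)\mu([\tau])\bigr)$ for a rational $\delta$ tuned so that the inclusion--exclusion argument still rules out witnesses on every such $\tau$---this works provided $\sigma$ is chosen sufficiently deep in the effective Lebesgue density construction. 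Carefully coordinating these two competing density thresholds, one ensuring essentiality and the other ruling out witnesses, is the key technical point of the proof.
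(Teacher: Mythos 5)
Your overall strategy matches the paper's: show the class of bad $Y$ is Lebesgue null, pick $\sigma$ on which the bad class has high relative density, and manufacture a $\Sigma^{0,Z}_1$ formula $\psi(U,V)$ that is essential yet has no witness in $\overline{A}_0 \times \overline{A}_1$, contradicting dependent $Z$-hyperimmunity. But the essentiality step has a genuine gap that your proposed fix does not close. The difficulty you identify is real: after fixing $R^\ast$ by pigeonhole on $G \subseteq B_e \cap [\sigma]$, countable additivity over the family $\{\mathcal{U}_{R^\ast,S}\}_{S > y}$ only gives some $S$ with $\mu(\mathcal{U}_{R^\ast,S}\cap[\sigma])>0$, not one meeting a fixed threshold. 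Your second-density-theorem fix pushes the problem around rather than resolving it, for two reasons. First, the sets $\mathcal{U}_{R^\ast,S}$ for varying $S$ are \emph{not nested}, so even on a cylinder $[\tau]$ where the union has near-full density, countable additivity does not produce a single $S$ with $\mathcal{U}_{R^\ast,S}$ near-full on $[\tau]$. Second, once you replace the fixed threshold by $(\exists\tau \succeq \sigma)(\mu(\mathcal{U}_{U,V}\cap[\tau]) > (1-\delta)\mu([\tau]))$, the no-witness half of the argument needs $B_e$ to have density near $1$ on the particular $\tau$ supplied by a witness $(R,S)$, but the existential is unconstrained and $B_e$ itself is not $\Sigma^{0,Z}_1$, so you cannot build that constraint into $\psi$.

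The paper closes the gap with a monotonicity trick that you do not use: it defines
\[
\psi(U,V) \;=\; \Bigl[\mu\bigl\{X \succ \sigma : (\exists \tilde U \subseteq U)(\exists \tilde V \subseteq V)\,\varphi(X,\tilde U,\tilde V)\bigr\} > 0.6\cdot 2^{-|\sigma|}\Bigr].
\]
The subset quantifiers make the underlying class monotone in both $U$ and $V$, so for fixed $R$ the sets over growing $S$ \emph{are} nested, and the essentiality claim follows from a Fatou-type argument on the measures of the complementary events $\Pcal(X,n,y_n)$, with no second application of density and no coordination between thresholds. Crucially, this modification is harmless to the no-witness half: if $\psi(R,S)$ holds with $R \subseteq \overline{A}_0$ and $S \subseteq \overline{A}_1$, the positive-measure intersection with the bad class yields some $X$ with $\varphi(X,\tilde R,\tilde S)$ where $\tilde R \subseteq R \subseteq \overline{A}_0$ and $\tilde S \subseteq S \subseteq \overline{A}_1$, since these containments are preserved under passing to subsets. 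You should incorporate this subset-closure into your $\mathcal{U}_{U,V}$; once you do, the fixed threshold works and the Lebesgue density theorem is only needed once, in the crude form of choosing $\sigma$ with high relative measure of the bad class.
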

\begin{proof}
It suffices to prove that for every~$\Sigma^{0,Z}_1$ formula~$\varphi(G, U, V)$
and every~$i \in \omega$, the following class is Lebesgue null.
\[
\Scal = \{X : [\varphi(X,U,V) \mbox{\itshape\ is essential }] \wedge 
	(\forall R, S \subseteqfin \omega)\varphi(X, R, S) \imp R \not \subseteq \overline{A}_0 \vee S \not \subseteq \overline{A}_1 \}
\]
Suppose it is not the case. There exists~$\sigma \in 2^{<\omega}$ such that
\[
\mu\{ X \in \Scal : \sigma \prec X \} > 0.8 \cdot 2^{-|\sigma|}
\]
Define
\[
\psi(U, V) = [\mu\{ X \succ \sigma : (\exists \tilde{U} \subseteq U)
	(\exists \tilde{V} \subseteq V)\varphi(X, \tilde{U}, \tilde{V})\} > 0.6 \cdot 2^{-|\sigma|}]
\]
By compactness, the formula~$\psi(U, V)$ is~$\Sigma^{0,Z}_1$. 

\begin{claim}
$\psi(U, V)$ is essential.
\end{claim}
\begin{proof*}
Suppose it is not. Then, there exists some~$x \in \omega$,
such that for every~$n \in \omega$, there is some~$y_n \in \omega$ such that $\psi([x,n], [y_n, +\infty))$
does not hold. Let~$\Pcal(X, n, y_n)$ be the formula 
$$(\forall \tilde{U} \subseteq [x, n])
	(\forall \tilde{V} \subseteq [y_n, +\infty))\neg \varphi(X, \tilde{U}, \tilde{V})
$$
Unfolding the definition of $\neg \psi([x,n], [y_n, +\infty))$,
\[
\mu\{ X \succ \sigma : \Pcal(X, n, y_n) \} > 0.4 \cdot 2^{-|\sigma|}
\]
Then, by Fatou's lemma,
\[
\mu\{ X \succ \sigma : (\exists^{\infty} n) \Pcal(X, n, y_n) \} > 0.2 \cdot 2^{-|\sigma|}
\]
Since whenever~$\Pcal(X, n, y_n)$ holds, so does $\Pcal(X, n-1, y_n)$,
\[
\mu\{ X \succ \sigma : (\forall n)(\exists y) \Pcal(X, n, y) \} > 0.2 \cdot 2^{-|\sigma|}
\]
Therefore
\[
\mu\{ X \succ \sigma : \varphi(X, U, V) \mbox{ is essential } \} \leq 0.8 \cdot 2^{-|\sigma|}
\]
Contradicting our assumption. This finishes the lemma.
\end{proof*}

By our claim and by dependent $Z$-hyperimmunity of~$A_0, A_1$, there exists some finite sets~$R \subseteq \overline{A}_0$
and $S \subseteq \overline{A}_1$ such that~$\psi(R, S)$ holds. 
For every $R, S$ such that~$\psi(R, S)$ holds,
there exists some~$X \in \Scal$ and some~$\tilde{R} \subseteq R$ and $\tilde{S} \subseteq S$
such that~$\varphi(X,\tilde{R}, \tilde{S})$ holds. By definition of~$X \in \Scal$, $\tilde{R} \not \subseteq \overline{A}_0$
or $\tilde{S} \not \subseteq \overline{A}_1$ and therefore
either $R \not \subseteq \overline{A}_0$ or $S \not \subseteq \overline{A}_1$. Contradiction. \qed
\end{proof}

\begin{corollary}\label{cor:wwkl-preservation-dependent-hyperimmunity}
$\wwkl$ admits preservation of dependent hyperimmunity.
\end{corollary}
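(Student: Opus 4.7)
The plan is to derive this corollary directly from Theorem~\ref{thm:randomness-dependent-hyperimmunity}, exploiting the measure-theoretic characterization of $\wwkl$: a $\wwkl$-instance is a tree $T \leq_T Z$ whose class of paths $[T] \subseteq 2^\omega$ has positive Lebesgue measure, so the collection of solutions to~$T$ forms a positive-measure class rather than merely a non-empty class.

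I would first fix a set~$Z$, a dependently $Z$-hyperimmune pair $A_0, A_1 \subseteq \omega$, and a $\wwkl$-instance $T \leq_T Z$ with $\mu([T]) > 0$. The aim is to produce some $Y \in [T]$ for which $A_0, A_1$ remains dependently $Y \oplus Z$-hyperimmune; such a $Y$ is by definition a solution to the $\wwkl$-instance~$T$ that preserves dependent hyperimmunity.

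Next, I would unpack what ``sufficiently random relative to~$Z$'' means in Theorem~\ref{thm:randomness-dependent-hyperimmunity}. Its proof shows that for each $\Sigma^{0,Z}_1$ formula $\varphi(G, U, V)$, the class $\Scal_\varphi$ of sets $Y$ witnessing a failure of dependent $Y \oplus Z$-hyperimmunity via~$\varphi$ is Lebesgue null. Since there are only countably many such formulas, the union $\mathcal{N} = \bigcup_{\varphi} \Scal_\varphi$ is still null, and every $Y \notin \mathcal{N}$ preserves the dependent hyperimmunity of $A_0, A_1$ relative to $Z$.

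Finally, since $\mu([T]) > 0 = \mu(\mathcal{N})$, the set $[T] \setminus \mathcal{N}$ is non-empty, and any element of it is the desired solution. I do not expect a genuine obstacle in this argument, since essentially all of the work is already encapsulated in Theorem~\ref{thm:randomness-dependent-hyperimmunity}; the only small subtlety to verify is that ``sufficiently random'' can legitimately be read as ``outside the explicit countable union of null classes $\Scal_\varphi$ produced in the proof of that theorem'', which is immediate from the form of the construction there.
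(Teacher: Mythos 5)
Your argument is correct and follows essentially the same route as the paper: both derive the corollary immediately from Theorem~\ref{thm:randomness-dependent-hyperimmunity} together with the fact that a $\wwkl$-instance corresponds to a positive-measure class of solutions. The one small difference is at the last step: the paper invokes Ku\v{c}era's theorem to say that any $Y$ Martin-L\"of random relative to~$Z$ is, up to a finite prefix, a path through the positive-measure $Z$-computable tree $T$, whereas you unpack ``sufficiently random'' as ``outside the countable union $\mathcal{N}$ of the null classes $\Scal_\varphi$ from the proof of Theorem~\ref{thm:randomness-dependent-hyperimmunity}'' and then use $\mu([T]) > 0 = \mu(\mathcal{N})$ directly to find a genuine path $Y \in [T] \setminus \mathcal{N}$. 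Your version avoids the Ku\v{c}era citation and the ``up to finite prefix'' adjustment, at the modest cost of peeking inside the proof of the theorem rather than using its statement as a black box; both are valid.
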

\begin{proof}
Let~$Z$ be a set and $A_0, A_1$ be a pair of dependently $Z$-hyperimmune sets.
Fix a $Z$-computable tree of positive measure~$T \subseteq 2^{<\omega}$.
By Theorem~\ref{thm:randomness-dependent-hyperimmunity}, 
the pair~$A_0, A_1$ is dependently $Y \oplus Z$-hyperimmune
for some Martin-L\"of random~$Y$ relative to~$Z$.
By Ku\v{c}era~\cite{Kuvcera1985Measure}, $Y$ is, up to finite prefix, a path through~$T$. \qed
\end{proof}

\begin{corollary}
For every~$k \geq 2$, $\rca \wedge \psrt^2_k \wedge \wwkl \nvdash \scac$.
\end{corollary}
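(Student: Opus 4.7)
The plan is to build a countable $\omega$-model of $\rca \wedge \psrt^2_k \wedge \wwkl$ which is not a model of $\scac$, keeping the dependent hyperimmunity of the pair produced in Theorem~\ref{thm:stable-semi-transitive-fairness} as an invariant throughout the construction. First I would take the computable stable semi-transitive coloring $f : [\omega]^2 \to 2$ from Theorem~\ref{thm:stable-semi-transitive-fairness}, set $A_i = \{x : \lim_s f(x,s) = i\}$ for $i < 2$, and work with the dependently hyperimmune pair $\overline{A}_0, \overline{A}_1$. Via the Hirschfeldt-Shore equivalence (provable in $\rca$) between $\scac$ and stable semi-transitive Ramsey's theorem for pairs, the coloring $f$ codes a stable partial order whose infinite chains and antichains correspond to its infinite $f$-homogeneous sets.

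Next, I would iteratively construct a nested sequence of principal Turing ideals $\Scal_n = \{Z : Z \leq_T U_n\}$ starting with $U_0 = f$. Fix an enumeration of all pairs $(X, \Psf)$ where $\Psf \in \{\psrt^2_k, \wwkl\}$ and $X$ is an instance, interleaved so that every pair that eventually appears in $\Mcal := \bigcup_n \Scal_n$ is served. At stage $n$, given $(X, \Psf)$ with $X \in \Scal_n$, I would apply Theorem~\ref{thm:psrt2k-preservation-dependent-hyperimmunity} if $\Psf = \psrt^2_k$, or Corollary~\ref{cor:wwkl-preservation-dependent-hyperimmunity} if $\Psf = \wwkl$, both with parameter $Z := U_n$, to obtain a solution $Y$ to $X$ for which the pair $\overline{A}_0, \overline{A}_1$ remains dependently $(Y \oplus U_n)$-hyperimmune. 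Setting $U_{n+1} := Y \oplus U_n$ propagates the invariant to the next stage.

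The resulting $\Mcal$ is an $\omega$-model of $\rca \wedge \psrt^2_k \wedge \wwkl$, and for every $Z \in \Mcal$ the pair $\overline{A}_0, \overline{A}_1$ is dependently $Z$-hyperimmune. Reproducing the argument from the proof of Corollary~\ref{cor:scac-not-preservation-of-dependent-hyperimmunity}, no infinite $f$-homogeneous set can lie in $\Mcal$: any such $H$ is contained in either $A_0$ or $A_1$, yet the $\Sigma^{0,H}_1$ formula $\varphi(U,V) \equiv U \neq \emptyset \wedge V \neq \emptyset \wedge U \cup V \subseteq H$ is essential, and dependent $H$-hyperimmunity would force witnesses intersecting both $A_0$ and $A_1$. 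Via the Hirschfeldt-Shore equivalence, $\Mcal$ thus contains no infinite chain or antichain for the stable partial order coded by $f$, so $\Mcal \not\models \scac$, establishing the claimed non-implication.

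No genuine obstacle arises here, since the three preservation results needed are already in place and the construction is a routine iterative closure. The only point requiring care is the standard bookkeeping that guarantees every instance and each of the two problems is eventually handled, which is achieved by any pairing enumeration of the instance/problem pairs.
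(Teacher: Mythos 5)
Your proposal is correct and is precisely the standard unfolding of what the paper compresses into the one-line proof ``Immediate by Theorem~\ref{thm:psrt2k-preservation-dependent-hyperimmunity}, Corollary~\ref{cor:wwkl-preservation-dependent-hyperimmunity}, and Corollary~\ref{cor:scac-not-preservation-of-dependent-hyperimmunity}'': you combine exactly those three preservation results in the routine iterated Turing-ideal construction. No difference in approach, only in level of detail.
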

\begin{proof}
Immediate by Theorem~\ref{thm:psrt2k-preservation-dependent-hyperimmunity},
Corollary~\ref{cor:wwkl-preservation-dependent-hyperimmunity}, and
Corollary~\ref{cor:scac-not-preservation-of-dependent-hyperimmunity}. \qed
\end{proof}

\begin{corollary}
$\rca \wedge \ads \wedge \wwkl \nvdash \scac$
\end{corollary}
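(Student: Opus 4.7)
The plan is to derive this as an immediate consequence of the previous corollary together with the equivalence between $\ads$ and $\psrt^2_2$. Specifically, taking $k=2$ in the corollary directly above yields
\[
\rca \wedge \psrt^2_2 \wedge \wwkl \nvdash \scac.
\]
By Theorem~\ref{thm:psrt22-is-ads}, $\rca \vdash \psrt^2_2 \leftrightarrow \ads$, so $\rca + \psrt^2_2 + \wwkl$ and $\rca + \ads + \wwkl$ prove exactly the same $\omega$-model-preserving consequences; in particular, neither proves $\scac$.

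Equivalently, one could exhibit the $\omega$-model directly: start from the computable stable semi-transitive coloring $f$ produced by Theorem~\ref{thm:stable-semi-transitive-fairness}, whose associated pair $\overline{A}_0, \overline{A}_1$ is dependently hyperimmune, and view $f$ as coding a stable partial order via the Hirschfeldt--Shore equivalence. Build a Turing ideal $\Mcal$ containing $f$ by iteratively closing under $\ads$-solutions and $\wwkl$-solutions. At each stage, an $\ads$-instance is (by Theorem~\ref{thm:psrt22-is-ads}) a $\psrt^2_2$-instance up to computable equivalence, so Theorem~\ref{thm:psrt2k-preservation-dependent-hyperimmunity} with $k=2$ provides a solution preserving dependent hyperimmunity of $A_0, A_1$; similarly Corollary~\ref{cor:wwkl-preservation-dependent-hyperimmunity} handles the $\wwkl$-instances. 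The resulting $\omega$-model satisfies $\ads + \wwkl$ while preserving dependent hyperimmunity of the pair $A_0, A_1$, so by Corollary~\ref{cor:scac-not-preservation-of-dependent-hyperimmunity} it cannot contain any $\scac$-solution for $f$.

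There is essentially no obstacle: the entire difficulty has already been absorbed into the three ingredients (the preservation results for $\psrt^2_2$ and $\wwkl$, the failure for $\scac$, and the $\rca$-equivalence $\ads \leftrightarrow \psrt^2_2$). The only minor point to verify is that the equivalence of Theorem~\ref{thm:psrt22-is-ads} is effective enough to transfer preservation of dependent hyperimmunity from $\psrt^2_2$-instances to $\ads$-instances, which is clear from the proof since the reductions between $f$, $g$ and $h$ there are one-step computable.
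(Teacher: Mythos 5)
Your proof is correct and follows the paper's approach: derive the result from the previous corollary by transporting its witness $\omega$-model along the $\rca$-provable implication $\psrt^2_2 \to \ads$. The paper cites only that one direction (Theorem~24 of Murakami--Yamazaki--Yokoyama), whereas you invoke the full equivalence of Theorem~\ref{thm:psrt22-is-ads} and also sketch a redundant direct construction of the model, but this is an inessential difference.
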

\begin{proof}
Immediate by the previous corollary and Theorem 24 of~\cite{Murakami2014Ramseyan}. \qed
\end{proof}

Whenever requiring the sets~$A_0$ and~$A_1$ to be co-c.e., we recover
the standard notion of hyperimmunity.
Therefore, the restriction of the preservation of dependent hyperimmunity
to co-c.e.\ sets is not a good computability-theoretic property to distinguish
consequences of Ramsey's theorem for pairs.

\begin{lemma}\label{lem:dependent-hyperimmunity-for-coce}
Fix two sets~$A_0, A_1$ such that~$A_0$ is $X$-co-c.e.
The pair~$A_0, A_1$ is dependently $X$-hyperimmune iff $A_0$ and $A_1$ are $X$-hyperimmune.
\end{lemma}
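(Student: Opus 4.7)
The plan is to prove both directions. The forward direction (dependent hyperimmunity implies hyperimmunity of each of $A_0, A_1$) is the easy one and was already observed earlier in the paper. Given an $X$-computable array $F_0 < F_1 < \dots$ tracing $A_0$, the $\Sigma^{0,X}_1$ formula $\varphi(U,V) \equiv (\exists i)(U = F_i)$ is essential (take $R = F_i$ with $F_i > x$, and $S = \emptyset$ which vacuously satisfies $S > y$), but no witnessing $R$ can lie in $\overline{A}_0$. The $A_1$ case is symmetric, with the roles of $U$ and $V$ interchanged.

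For the reverse direction, I would exploit the co-c.e.\ hypothesis on $A_0$ to collapse the two-variable essentialness of an arbitrary $\Sigma^{0,X}_1$ formula $\varphi(U,V)$ into a one-variable question about $A_1$. Since $\overline{A}_0$ is $X$-c.e., the predicate ``$R$ is a finite subset of $\overline{A}_0$'' is $\Sigma^{0,X}_1$, so the formula
$$
\theta(V) \equiv (\exists R \subseteqfin \overline{A}_0)\, \varphi(R, V)
$$
is also $\Sigma^{0,X}_1$. The key claim is that for every $y \in \omega$, there is a finite $S > y$ with $\theta(S)$. If this fails at some $y^{*}$, then every finite $R$ admitting a witness $S > y^{*}$ of $\varphi(R,S)$ must meet $A_0$. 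Using essentialness of $\varphi$, one can then $X$-computably build $F_0 < F_1 < \dots$ by iteratively searching, given $\max F_{i-1}$, for a finite $R > \max F_{i-1}$ together with some $S > y^{*}$ witnessing $\varphi(R,S)$; essentialness of $\varphi$ guarantees that the search halts, and each resulting $F_i$ meets $A_0$, so $\vec{F}$ traces $A_0$, contradicting its $X$-hyperimmunity.

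Once this key claim is in hand, the rest is a standard single-variable hyperimmunity argument: define an $X$-computable array $E_0 < E_1 < \dots$ where $E_i$ is the first finite $S > \max E_{i-1}$ for which $\theta(S)$ appears in the enumeration. By $X$-hyperimmunity of $A_1$, some $E_{i^{*}}$ avoids $A_1$, and unpacking $\theta(E_{i^{*}})$ produces a finite $R \subseteq \overline{A}_0$ with $\varphi(R, E_{i^{*}})$, witnessing the required instance of dependent $X$-hyperimmunity. The main obstacle is the setup of $\theta$ itself: seeing that the co-c.e.\ hypothesis on $A_0$ is exactly what is needed to keep $\theta$ in $\Sigma^{0,X}_1$, and showing via the trace argument against $A_0$ that $\theta$ inherits a one-variable form of essentialness from $\varphi$; after that, both hyperimmunities are used in a routine way.
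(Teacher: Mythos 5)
Your proof is correct and takes essentially the same approach as the paper: in the reverse direction you package the existential quantification over $R \subseteq \overline{A}_0$ (made $\Sigma^{0,X}_1$ by the co-c.e.\ hypothesis) into a one-variable formula $\theta$, prove its cofinality via a trace-against-$A_0$ contradiction, and then apply hyperimmunity of $A_1$ to an array built from $\theta$. The paper does the same two steps, just presented directly (building the array of $S$'s first and showing it is infinite via an auxiliary array of $R$'s) rather than by contradiction through an intermediate formula.
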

\begin{proof}
We first show that if~$A_0$ and~$A_1$ are dependently $X$-hyperimmune
then both~$A_0$ and~$A_1$ are $X$-hyperimmune. Let~$F_0, F_1, \dots$ be a $X$-c.e.\ array.
Let $\varphi(U, V)$ be the $\Sigma^{0,X}_1$ formula which holds if~$U = F_i$ for some~$i \in \omega$.
The formula $\varphi(U, V)$ is essential, therefore there $\varphi(R, S)$ holds for some finite set~$R \subseteq \overline{A}_0$
and $S \subseteq \overline{A}_1$. In particular, $R = F_i$ for some~$i \in \omega$, therefore $F_i \subseteq \overline{A}_0$
and $A_0$ is hyperimmune. Similarly, the $\Sigma^{0,X}_1$ formula $\psi(U, V)$ which holds if $V = F_i$ for some~$i \in \omega$
witnesses that $A_1$ is hyperimmune.

We now prove that if $A_0$ and~$A_1$ are $X$-hyperimmune and $A_0$ is $X$-co-c.e., then
the pair $A_0, A_1$ is dependently $X$-hyperimmune.
Let~$\varphi(U, V)$ be an essential $\Sigma^{0,X}_1$ formula.
Define an $X$-c.e.\ sequence of sets $F_0 < F_1 < \dots$ such that for every~$i \in \omega$,
there is some $R < F_i$ such that~$\varphi(R, F_i)$ holds and~$R \subseteq \overline{A}_0$.
First, notice that the sequence is $X$-c.e.\ since $A_0$ is $X$-co-c.e.
Second, we claim that the sequence is infinite. To see this,
define an $X$-c.e.\ array $E_0 < E_1 < \dots$ such that for every~$i \in \omega$, there is some finite set~$S > E_i$
such that $\psi(E_i, S)$ holds. The array is infinite since $\psi(U, V)$ is essential.
Since $A_0$ is $X$-hyperimmune, there are infinitely many $i$'s such that $E_i \subseteq \overline{A}_0$.
Last, by $X$-hyperimmunity of~$A_1$, there is some~$i \in \omega$ such that~$F_i \subseteq \overline{A}_1$.
By definition of~$F_i$, there is some~$R \subseteq \overline{A}_0$ such that~$\varphi(R, F_i)$ holds. \qed
\end{proof}

\begin{corollary}
$\rt^2_2$ admits preservation of dependent hyperimmunity for co-c.e.\ sets.
\end{corollary}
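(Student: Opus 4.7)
The plan is to reduce this to the already-established Corollary~\ref{cor:rt22-preservation-hyperimmunity-coce} via Lemma~\ref{lem:dependent-hyperimmunity-for-coce}. Concretely, I would interpret ``preservation of dependent hyperimmunity for co-c.e.\ sets'' in the natural way: given a set $Z$, a pair of $Z$-co-c.e.\ sets $A_0, A_1$ which is dependently $Z$-hyperimmune, and an $\rt^2_2$-instance $f \leq_T Z$, I must produce an infinite $f$-homogeneous set $H$ such that the pair $A_0, A_1$ remains dependently $(H \oplus Z)$-hyperimmune.

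First I would unpack the hypothesis with Lemma~\ref{lem:dependent-hyperimmunity-for-coce}: since $A_0$ is $Z$-co-c.e., the dependent $Z$-hyperimmunity of the pair is equivalent to each of $A_0$ and $A_1$ being $Z$-hyperimmune individually. So in fact I have a finite (and a fortiori countable) list of $Z$-co-c.e.\ $Z$-hyperimmune sets.

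Next I would invoke Corollary~\ref{cor:rt22-preservation-hyperimmunity-coce}, which states that $\rt^2_2$ admits preservation of hyperimmunity for countable sequences of co-c.e.\ hyperimmune sets. Applied to the sequence $A_0, A_1$ and the instance $f$, this yields an infinite $f$-homogeneous set $H$ such that both $A_0$ and $A_1$ are $(H \oplus Z)$-hyperimmune.

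Finally I would close the loop: the sets $A_0$ and $A_1$ are still $(H \oplus Z)$-co-c.e.\ (co-c.e.-ness is upward-absolute in the oracle), so applying the other direction of Lemma~\ref{lem:dependent-hyperimmunity-for-coce} relativized to $H \oplus Z$ shows that the pair $A_0, A_1$ is dependently $(H \oplus Z)$-hyperimmune, which is what we wanted. There is no real obstacle here; the only thing to double-check is that Lemma~\ref{lem:dependent-hyperimmunity-for-coce} only required \emph{one} of the two sets in the pair to be co-c.e., so even a weaker version of the hypothesis (only $A_0$ co-c.e.) would suffice, which makes the reduction entirely painless.
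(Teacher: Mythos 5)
Your proof is correct and is exactly the one-line argument the paper has in mind: the corollary is stated without proof, right after Lemma~\ref{lem:dependent-hyperimmunity-for-coce}, precisely because it follows by sandwiching Corollary~\ref{cor:rt22-preservation-hyperimmunity-coce} between the two directions of that lemma, as you do.

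One caveat on your closing remark: while Lemma~\ref{lem:dependent-hyperimmunity-for-coce} indeed only requires $A_0$ to be co-c.e., the preservation step does \emph{not} go through under that weaker hypothesis. Corollary~\ref{cor:rt22-preservation-hyperimmunity-coce} is a statement about sequences of co-c.e.\ hyperimmune sets; if $A_1$ is not co-c.e., you cannot feed it into that corollary, and you would instead need $\rt^2_2$ to preserve two arbitrary hyperimmunities simultaneously, which fails (the paper notes that even $\ads$, a consequence of $\rt^2_2$, does not preserve two hyperimmunities). So both $A_0$ and $A_1$ must be co-c.e., as the statement of the corollary requires.
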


\section{Weakly stable partial orders}

In their seminal paper~\cite{Cholak2001strength}, Cholak, Jockusch and Slaman had the idea to split Ramsey's theorem for pairs
into two simpler statements, namely, stable Ramsey's theorem for pairs ($\srt^2_2$) and cohesiveness ($\coh$),
in order to get more insights about the strength of $\rt^2_2$ by manipulating $\srt^2_2$ and $\coh$ independently.
Hirschfeldt and Shore~\cite{Hirschfeldt2007Combinatorial} applied the same idea to their statements about linear and partial orders,
and introduced the corresponding notions of stability. In the case of partial orders, there are however
two possible notions of stability.

Given a partial order $(P, \leq_P)$, we say that $x \in P$ is \emph{small}, \emph{large} or \emph{isolated}
if for all but finitely many $y \in P$, $x \leq_P y$, $x \geq_P y$, or $x |_P y$, respectively.
We write $S^{*}(P)$, $L^{*}(P)$ and $I^{*}(P)$ for the set of small, large and isolated elements of $P$, respectively.
A partial order is \emph{weakly stable} if every element is either small, large, or isolated, that is,
$P = S^{*}(P) \cup L^{*}(P) \cup I^{*}(P)$.
A partial order is \emph{stable} if every element is small or isolated, or if every element is large or isolated,
that is, $P = S^{*}(P) \cup I^{*}(P)$ or $P = L^{*}(P) \cup I^{*}(P)$. We let $\scac$ and $\wscac$
be the restriction of $\cac$ to stable and weakly stable partial orders, respectively.

The notion of stable partial order was introduced by Hirschfeldt and Shore.
They proved that $\ads$ is equivalent to the statement ``Every partial order has a stable suborder'',
showing therefore that $\rca \vdash \cac \biimp \ads \wedge \scac$.
Hirschfeldt and Shore noticed that $\scac$ was not the most immediate notion of stability,
but justified their choice by proving an equivalence between $\scac$ and the restriction of $\srt^2_2$
to semi-transitive colorings. Furthermore, Jockusch et al.~\cite{Jockusch2009Stability} proved that $\rca \vdash \scac \biimp \wscac$,
showing therefore that the choice of either notion had no impact to the strength of the statement
in reverse mathematics.
However, the implication $\scac \imp \wscac$ involved two applications of $\scac$, and 
Astor et al.~\cite{Astor2016uniform} proved that those two applications were necessary, by showing that $\wscac \not \leq_c \scac$.
Their proof uses a very involved notion of forcing building an instance of $\wscac$ and solutions to $\scac$ simultaneously.
In this section, we give a simpler proof formulated in terms of immunity, 
and furthermore show that the instance of $\wscac$ can be chosen 
to be computable.

\begin{definition}[Combined immunity]
A formula~$\varphi(u, v)$ is \emph{combinedly essential}
if for every $x \in \omega$, there are some $r, s > x$ such that~$r \neq s$
and $\varphi(r, s)$ holds.
A pair of sets~$A_0, A_1 \subseteq \omega$ is \emph{combinedly $X$-immune} 
if for every combinedly essential $\Sigma^{0,X}_1$ formula~$\varphi(u, v)$,
$\varphi(r, s)$ holds for some~$r \in \overline{A}_0$ and~$s \in \overline{A}_1$.
\end{definition}

In particular, if $A_0$ and $A_1$ are combinedly $X$-immune, 
then so are $A_1$ and $A_0$. Moreover, they are both $X$-immune.
Note that the notion of combined immunity differs from dependent immunity
by the alternation of quantifiers in the definition of essentiality.

\begin{theorem}\label{thm:wscac-combinedly-immune}
There is a computable weakly stable partial order $(P, \leq_P)$
such that $S^{*}(P) \cup L^{*}(P)$ and $I^{*}(P)$ are both hyperimmune,
and are combinedly immune.
\end{theorem}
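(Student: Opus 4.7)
My plan is to build the weakly stable partial order via a stable semi-transitive coloring $f : [\omega]^2 \to 2$, following the framework of Theorem~\ref{thm:stable-semi-transitive-fairness}. Interpreting $x \leq_P y \iff (x = y) \vee (x < y \wedge f(x, y) = 1)$ makes semi-transitivity of $f$ on color $1$ equivalent to transitivity of $\leq_P$; since the resulting partial order is compatible with the natural order on $\omega$ we have $L^*(P) = \emptyset$, and a stable coloring then yields a weakly stable partial order with $S^*(P) = \{x : \lim_y f(x, y) = 1\}$ and $I^*(P) = \{x : \lim_y f(x, y) = 0\}$. Writing $A_i = \{x : \lim_y f(x, y) = i\}$, so that $A_0 = I^*(P)$ and $A_1 = S^*(P)$, the theorem reduces to building $f$ so that $A_0$ and $A_1$ are each hyperimmune and the pair $(A_1, A_0)$ is combinedly immune.

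I would carry out the construction by a finite-injury priority argument with movable markers, very close to that of Theorem~\ref{thm:stable-semi-transitive-fairness}. At each stage $s$ we maintain a tentative partition $\{A_{0,s}, A_{1,s}\}$ of $\{0, \dots, s-1\}$ and declare $f(x, s) = i$ iff $x \in A_{i,s}$. Three schemes of requirements, each with its own movable marker, are interleaved in a fixed priority order: for every $e$, $\Qcal^0_e$ asks that the $e$-th computable array $F^e_0, F^e_1, \dots$ has some $F^e_i \subseteq A_1$ (this yields hyperimmunity of $A_0$); $\Qcal^1_e$ asks for some $F^e_i \subseteq A_0$ (hyperimmunity of $A_1$); and $\Rcal_e$ asks, when $\varphi_e$ is combinedly essential, for a pair $(r, s')$ with $\varphi_e(r, s')$, $r \in A_0$ and $s' \in A_1$.

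A $\Qcal$-strategy acts by locating some $F^e_i$ above its marker of the right kind and either freezing $F^e_i$ in its current side or moving its whole block wholesale into the opposite side; the $\Rcal_e$-strategy, upon finding $r \neq s'$ in $(m_e, t]$ with $\varphi_{e,t}(r, s')$, performs an interval move directly analogous to the one in Theorem~\ref{thm:stable-semi-transitive-fairness}, forcing $r$ into $A_{0,t+1}$ and $s'$ into $A_{1,t+1}$. Every acting strategy subsequently lifts the markers of all strategies of lower priority past the affected interval, and a satisfied strategy lies dormant until injured by a higher-priority action.

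Verification follows the finite-injury template of Theorem~\ref{thm:stable-semi-transitive-fairness}: by induction on priority every strategy acts only finitely often, every marker stabilises, every $\lim_y f(x, y)$ exists, and the induced partial order is weakly stable with $S^*(P) = A_1$, $L^*(P) = \emptyset$ and $I^*(P) = A_0$. Combined essentiality of $\varphi_e$ together with the stabilisation of $m_e$ produces a witness pair above the stabilised marker on which $\Rcal_e$ can act, and the $\Qcal$-strategies succeed because only finitely many elements are ever moved by higher-priority strategies, so every computable array eventually contains a suitable $F^e_i$ that the strategy can lock down. The main technical subtlety I expect is the analogue of Claim~1 in the proof of Theorem~\ref{thm:stable-semi-transitive-fairness}, namely the verification that the $\Rcal_e$-strategy's interval move preserves semi-transitivity of $f$ for color~$1$: this requires a careful choice of the interval boundaries and of the witness pair $(r, s')$ so that the $f(x, y)$-values already set before the move remain compatible with the post-move $A_0/A_1$ assignment, a bookkeeping task that the movable-marker discipline enforces exactly as in Theorem~\ref{thm:stable-semi-transitive-fairness}.
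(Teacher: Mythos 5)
Your reduction to a stable semi-transitive coloring cannot work, because it produces a partial order of the wrong kind; in fact the object you propose to build does not exist. With $x \leq_P y \iff (x = y) \vee (x <_\Nb y \wedge f(x,y) = 1)$ and $f$ stable you correctly observe that $L^*(P) = \emptyset$, so $P = S^*(P) \cup I^*(P)$. But that makes $(P, \leq_P)$ a \emph{stable} partial order, not merely a weakly stable one, and no computable stable order can have the required immunity properties. To see this, apply the paper's own Theorem~\ref{thm:scac-combined-immunity} to your $P$ with $A_0 = S^*(P)$ and $A_1 = I^*(P)$: it produces an infinite $P$-chain or $P$-antichain $H$ such that $S^*(P)$ and $I^*(P)$ are both $H$-immune. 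Since $L^*(P) = \emptyset$, every element of an infinite $P$-chain is $P$-below infinitely many other elements of the chain, hence small, so the chain lies entirely in $S^*(P)$; symmetrically every infinite $P$-antichain lies in $I^*(P)$. Thus $H$ is an $H$-computable infinite subset of $S^*(P)$ or of $I^*(P)$, contradicting its own $H$-immunity. So no choice of $f$ can make your plan succeed, and the difficulty is not in the verification of semi-transitivity you anticipate but in the impossibility of the target object.

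This is not incidental: the intended corollary $\wscac \not\leq_c \scac$ works precisely because the partial order of Theorem~\ref{thm:wscac-combinedly-immune} is weakly stable but \emph{not} stable, so Theorem~\ref{thm:scac-combined-immunity} cannot be turned against it. The paper therefore builds the order directly as a three-way partition $(S, L, I)$ of $\omega$, and the combined-immunity strategies ($\Tcal_e$ in the paper's notation) make essential use of all three classes: upon finding witnesses $u \neq v$ above their marker, they inspect whether $u \leq_P v$, move the $\leq_P$-upward closure of $v$ inside the working interval into $L$ (or the downward closure into $S$, according to the comparison), and dump the remainder of the interval into $I$, so that the pair $(u,v)$ lands in $I \times (S \cup L)$. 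Your proposal reproduces the movable-marker machinery of Theorem~\ref{thm:stable-semi-transitive-fairness} faithfully, but collapsing the construction onto a two-valued semi-transitive coloring eliminates $L^*(P)$, and that class is indispensable here.
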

\begin{proof}
Fix an enumeration~$\varphi_0(U), \varphi_1(U), \dots$ of all $\Sigma^0_1$ formulas where $U$ is a finite coded set parameter,
and an enumeration $\psi_0(u, v), \psi_1(u, v), \dots$ of all $\Sigma^0_1$ formulas where $u$ and $v$ are integer parameters.
The construction of the partial order $(P, \leq_P)$ is done by a finite injury priority argument with a movable marker procedure. 
Recall that a formula $\varphi(U)$ where $U$ is a finite coded set is \emph{essential}
if for every $x \in \omega$, there is some finite set~$R > x$ such that $\varphi(R)$ holds.
The following schemes of requirements ensure that $I^{*}(P)$ and $S^{*}(P) \cup L^{*}(P)$
will both be hyperimmune.
\[
\Rcal_e : \varphi_e(U) \mbox{ essential} \imp 
	(\exists R \subseteq_{fin} S^{*}(P) \cup L^{*}(P))\varphi_e(R)
\]
\[
\Scal_e : \varphi_e(U) \mbox{ essential} \imp 
	(\exists R \subseteq_{fin} I^{*}(P))\varphi_e(R) \hspace{1.2cm}
\]
The following scheme of requirements ensures that $S^{*}(P) \cup L^{*}(P)$ and $I^{*}(P)$
are combinedly immune.
\[
\Tcal_e : \psi_e(u, v) \mbox{ combinedly essential} \imp 
	(\exists r \in I^{*}(P))(\exists s \in S^{*}(P) \cup L^{*}(P))\psi_e(r,s)
\]

The requirements are given an interleaved priority ordering.
We proceed by stages, maintaining three sets $S$, $L$ and $I$,
which represent $S^{*}(P)$, $L^{*}(P)$ and $I^{*}(P)$, respectively.
At stage 0, $S_0 = L_0 = \emptyset$ and $I_0 = \{0\}$ and $\leq_P$ is nowhere defined.
Moreover, to each group of requirements $\Rcal_e$, $\Scal_e$, $\Tcal_e$, we associate
a marker $m_e$, initialized to~$0$.

A strategy for~$\Rcal_e$ \emph{requires attention at stage~$s+1$}
if $\varphi_e(R)$ holds for some~$R \subseteq [m_e, s]$.
The strategy sets~$S_{s+1} = S_s \cup [m_e, s]$, $L_{s+1} = L_s \setminus [m_e, s]$
and $I_{s+1} = I_s \setminus [m_e, s]$.

A strategy for~$\Scal_e$ \emph{requires attention at stage~$s+1$}
if $\varphi_e(R)$ holds for some~$R \subseteq [m_e, s]$.
The strategy sets~$S_{s+1} = S_s \setminus [m_e, s]$, $L_{s+1} = L_s \setminus [m_e, s]$,
and $I_{s+1} = I_s \cup [m_e, s]$.

A strategy for~$\Tcal_e$ \emph{requires attention at stage~$s+1$}
if $\varphi_e(u, v)$ holds for some~$u, v \in [m_e, s]$ such that $u \neq v$.
Let $\{v\} \uparrow = \{x \in [m_e, s] : v \leq_P x \}$ and $\{v\} \downarrow = \{x \in [m_e, s] : v \geq_P x \}$.
If $u \leq_P v$, then the strategy sets $S_{s+1} = S_s \setminus [m_e, s]$,
$L_{s+1} = (L_s \setminus [m_e, s]) \cup \{v\} \uparrow$, and $I_{s+1} = I_s \cup ([m_e, s] \setminus \{v\} \uparrow)$.
If $u \not \leq_P v$, then the strategy sets $S_{s+1} = (S_s \setminus [m_e, s]) \cup \{v\} \downarrow$,
$L_{s+1} = L_s \setminus [m_e, s]$, and $I_{s+1} = I_s \cup ([m_e, s] \setminus \{v\} \downarrow)$.

At stage~$s+1$, assume that~$S_s \cup L_s \cup I_{s} = [0,s]$
and that~$\leq_P$ is defined for each pair over~$[0,s)$.
For each~$x \in [0,s)$, set~$x \leq_P s$ if $x \in S_s$, $x \geq_P s$ if $x \in L_s$, and $x |_P s$ if $I_s$.
If some strategy $\Rcal_e$, $\Scal_e$ or $\Tcal_e$ requires attention at stage~$s+1$, take the least such one and execute it.
Then, declare the strategy satisfied, declare all the strategies of lower priority unsatisfied,
and set $m_i = s+1$ for every $i \geq e$.
If no strategy requires attention, then add $\{s\}$ to~$I_{s+1}$ and go to the next stage. This ends the construction.

Each time a strategy acts, it changes the marker of all strategies of lower priority, and is declared satisfied.
Once a strategy is satisfied, only a strategy of higher priority can injure it.
Therefore, each strategy acts finitely often, and the markers stabilize.
It follows that relation $\leq_P$ is weakly stable.

\begin{claim}
The relation $\leq_P$ is transitive.
\end{claim}
\begin{proof*}
Suppose that there are three elements $x, y <_\Nb z$ such that
$x \leq_P y \leq_P z \leq_P x$. We have two cases.
In the first case, $x <_\Nb y$. Then, by construction, $x \in S_y \cap L_z$ and $y \in S_z$.
Then, $x$ must have been moved to $L$ at a stage $s$ between stage $y$ and stage $z$, 
otherwise we would have $x \geq_P y$ or $x \not \in L_z$. Let $s_0$ be the last such stage.
When $x$ is moved to $L$ at stage $s_0$, it is because of some requirement $\Rcal_e$, $\Scal_e$ or $\Tcal_e$
such that $m_e \leq_\Nb x$. But then, after this, $m_i$ is moved to a value greater than $s_0 \geq y$ for every $i \geq e$.
By construction, when $x$ is moved to $L$ after stage $y$, then so is $\{x\} \uparrow$, and in particular so is $y$. 
Therefore, if $y \in S_z$, it must have been moved out of $L$ at some later stage $s_1 \geq s_0$,
and by a strategy of higher priority $\Rcal_i$, $\Scal_i$ or $\Tcal_i$, for some $i < e$.
By construction, at stage $s_0$, $m_i \leq_\Nb m_e \leq_\Nb x$.
We claim that $m_i \leq_\Nb x$ at stage $s_1$. If not, then 
the value of $m_i$ must have been changed at a stage between $s_0$ and $s_1$, 
but then by construction, $m_i$ has been moved to a value greater than $s_0 \geq y$.
Since $\Rcal_i$, $\Scal_i$ or $\Tcal_i$ can change only values greater than $m_i$, and since $y$ have been changed, we obtain a contradiction. The movement of $y$ can be due to $\Rcal_i$, in which case
$x \in [m_i, s_1] \subseteq S_{s_1}$ since $m_i \leq_\Nb x \leq_\Nb s_1$,
or it can be due to $\Tcal_i$, in which case $y \in \{v\}\downarrow$
for some $v$, and hence $x \in \{y\} \downarrow \subseteq \{v\}\downarrow \subseteq S_{s_1}$.
In both cases, $x \in S_{s_1}$.
By maximality of $s_0$, $x$ does not enter again in $L$ before stage $z$, contradicting $x \in L_z$.
The case where $x >_\Nb y$ is treated similarly.
\end{proof*}

\begin{claim}
For every~$e \in \omega$, $\Rcal_e$, $\Scal_e$ and $\Tcal_e$ are satisfied.
\end{claim}
\begin{proof*}
By induction over the priority order. Fix some~$e$, and let~$s_0$ be a stage after $m_e$ does not change any more.
Suppose for contradiction that $\Rcal_e$ is not satisfied at stage~$s_0$.
Then $\varphi_e(U)$ is essential, so $\varphi_e(R)$ holds for some set~$R > m_e$.
The strategy $\Rcal_e$ will require attention at some stage before $\max(s_0, R)$,
and will receive attention since no strategy of higher priority acts after stage $s_0$.
Then it will act, and $m_e$ will be moved to a value greater than $s_0$, contradiction.
The cases of $\Scal_e$ and $\Tcal_e$ is similar.
\end{proof*}

This last claim finishes the proof. \qed
\end{proof}

\begin{theorem}\label{thm:scac-combined-immunity}
For every computable stable partial order $(P, \leq_P)$ and for every
pair of sets~$A_0, A_1$ which are both hyperimmune, and are combinedly immune,
there is an infinite chain or antichain $H$ such that $A_0$ and $A_1$ are both $H$-immune.
\end{theorem}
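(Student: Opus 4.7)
The plan is to mimic the structure of Theorem~\ref{thm:cac-preservation-cbimmunity} and Theorem~\ref{thm:psrt2k-preservation-dependent-hyperimmunity}: a variant of Mathias forcing would build simultaneously a chain $G_0$ and an antichain $G_1$, using combined immunity to force a disjunctive requirement at each level. Since $P$ is stable I may assume $P = S^*(P) \cup I^*(P)$, and I assume for contradiction that no infinite chain or antichain $H$ makes both $A_0$ and $A_1$ $H$-immune. A condition is $c = (F_0, F_1, X)$ where $F_0$ is a finite chain, $F_1$ a finite antichain, and $X$ is an infinite set such that $F_0 \cup \{x\}$ is a chain and $F_1 \cup \{x\}$ an antichain for every $x \in X$, with $A_0, A_1$ combinedly $X$-immune. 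Extension and satisfaction are defined componentwise, and a condition forces a formula if it holds for every pair of \emph{infinite} sets satisfying it.

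The density lemma, that $|F_0|$ or $|F_1|$ can always be increased, would follow the contradiction-with-assumption template of Lemma~\ref{lem:psrt-cac-fairness-force-Q}: if $|F_0|$ cannot be extended at $c$, then each $x \in X$ has only finitely many chain-compatible follow-ups in $X$, so $X$ thins $X$-computably to an infinite antichain $H$; combined $X$-immunity gives $A_0, A_1$ both $H$-immune, contradicting our initial assumption. The case of $|F_1|$ is symmetric.

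Writing $\mathcal{R}^H_e$ for ``$W_e^H$ is finite or $W_e^H \cap \overline{A}_0 \neq \emptyset$'' and $\mathcal{S}^H_f$ for its analog with $\overline{A}_1$, the core requirement for each tuple $(e_0, f_0, e_1, f_1)$ is
\[
\mathcal{R}_{e_0, f_0, e_1, f_1} : (\mathcal{R}^{G_0}_{e_0} \wedge \mathcal{S}^{G_0}_{f_0}) \vee (\mathcal{R}^{G_1}_{e_1} \wedge \mathcal{S}^{G_1}_{f_1}).
\]
To force this from $c$, I would introduce the $\Sigma^{0,X}_1$ formula $\psi(u, v)$ asserting that either there is a valid extension $E_0 \subseteq X$ of $F_0$ with $u \in W_{e_0}^{F_0 \cup E_0}$ and $v \in W_{f_0}^{F_0 \cup E_0}$, or the symmetric statement on the $G_1$ side using $e_1, f_1$. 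If $\psi$ is combinedly $X$-essential, combined $X$-immunity of the pair $A_0, A_1$ yields $r \in \overline{A}_0$ and $s \in \overline{A}_1$ with $\psi(r, s)$, and the extension along the realized side forces the matching conjunct in one shot.

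The hard part is the essentiality step: unlike the $\psrt^2_k$ case, each disjunct here is a conjunction, so the \emph{same} extension $E_j$ must simultaneously witness both the $A_0$-threat and the $A_1$-threat on one side. If $\psi$ fails to be combinedly essential, a threshold $x$ together with monotonicity of enumeration forces, in every infinite-pair generic satisfying $c$, that at least one of $W_{e_j}^{G_j}, W_{f_j}^{G_j}$ is finite for each $j < 2$; the four residual subcases (determined by which piece is trivialized by finiteness) must then be dispatched by a secondary extension appealing to the individual hyperimmunity of $A_0$ and $A_1$, in the spirit of Case~2 of Lemma~\ref{lem:cac-immunity-force-R}. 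A sufficiently generic filter then produces $(G_0, G_1)$ with at least one side infinite, and that side is the desired $H$.
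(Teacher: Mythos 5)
Your high-level plan — Mathias forcing that builds a chain $G_0$ and an antichain $G_1$ simultaneously, using combined immunity to force a disjunctive requirement — is the right one and matches the paper's. But there are two genuine gaps, and the second is exactly the piece of combinatorics the paper's proof is built around.

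First, your requirement shape is harder than necessary and you do not resolve the difficulty it creates. You force $(\Rcal^{G_0}_{e_0} \wedge \Scal^{G_0}_{f_0}) \vee (\Rcal^{G_1}_{e_1} \wedge \Scal^{G_1}_{f_1})$, a conjunction on each side, and you yourself flag the essentiality step as ``the hard part.'' The paper sidesteps this entirely: it forces the weaker requirements $\Rcal_{e_0,e_1,i_0,i_1} : \Rcal^{G_0}_{e_0,i_0} \vee \Rcal^{G_1}_{e_1,i_1}$, where $\Rcal^G_{e,i}$ says $\Phi_e^G$ is not an infinite subset of $A_i$, with the pair $(i_0,i_1)$ ranging over all four elements of $\{0,1\}^2$. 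Each such requirement involves only one functional per side, so a single use of combined immunity of the pair $A_{i_0}, A_{i_1}$ suffices — and the key observation is that all four pairs $(A_0,A_0)$, $(A_0,A_1)$, $(A_1,A_0)$, $(A_1,A_1)$ are combinedly immune, the diagonal ones because $A_0$ and $A_1$ are hyperimmune. Forcing all of these requirements yields ``$A_0$ and $A_1$ both $G_i$-immune for some $i$'' by the usual Lachlan-style argument: if some $\Phi_{e_0}^{G_0}$ were an infinite subset of $A_{i_0}$, then every requirement with those first two indices forces the $G_1$ side. Your sketch of handling the failure of combined essentiality by ``secondary extensions appealing to the individual hyperimmunity'' would, once unpacked, have to force a residual disjunction of the form $\Scal^{G_0}_{f_0} \vee \Rcal^{G_1}_{e_1}$ anyway, so you would need the paper's lemma regardless; better to make it the requirement from the start.

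Second, your $\psi(u,v)$ hunts for an extension $E_j$ on one side independently, and this does not guarantee a valid condition. If the first disjunct gives a finite set $E_0 \subseteq X$ with $F_0 \cup E_0$ a chain, you cannot computably ensure $E_0 \subseteq S^{*}(P)$, and if $\max E_0 \in I^{*}(P)$ the new reservoir dies after finitely many steps. The paper's ``split pair'' device solves this: it only considers pairs $E_0, E_1 \subseteq X(c)$ that are chain and antichain extensions respectively and additionally satisfy $\max E_0 \leq_P x$ for every $x \in E_1$. By stability of $P$ this forces a dichotomy — either $E_0 \subseteq S^{*}(P)$ or $E_1 \subseteq I^{*}(P)$ — so at least one of the two extensions $(F_0 \cup E_0, F_1)$, $(F_0, F_1 \cup E_1)$ is valid, and the $\Sigma^0_1$ formula searching over split pairs (one witness per side) is the one whose combined essentiality you actually need. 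Without a device of this kind, Case 2 of your forcing lemma can produce a dead condition.
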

\begin{proof}
Fix a computable partial stable partial order $(P, \leq_P)$, and assume that $P = S^{*}(P) \cup I^{*}(P)$.
The other case is symmetric. Assume that there is no chain or antichain $H$ such that
$A_0$ and $A_1$ are both $H$-immune, otherwise we are done.
We will build two infinite sets~$G_0$ and~$G_1$, such that~$G_0$ is an infinite ascending sequence,
$G_1$ is an infinite antichain, and such that $A_0$ and $A_1$ are both $G_i$-immune
for some~$i < 2$.

The construction is done by a variant of Mathias forcing $(F_0, F_1)$, 
where~$F_0 \subseteq S^{*}(P)$ is a finite ascending sequence and $F_1 \subseteq I^{*}(P)$
is a finite antichain. Given a condition $c = (F_0, F_1)$, we let 
$$
X(c) = \{ z \in \omega : (\forall x \in F_0)[x < z \wedge x <_P z] \wedge (\forall y \in F_1)[y < z \wedge y |_P z] \}
$$
Note that the set $X(c)$ is cofinite.
A condition~$d = (E_0, E_1)$ \emph{extends} $c = (F_0, F_1)$ if 
$(E_i, X(d))$ Mathias extends $(F_i, X(c))$ for each~$i < 2$.
A pair of sets~$G_0, G_1$ \emph{satisfies} a condition~$c = (F_0, F_1)$
if~$G_0$ is an infinite ascending sequence, $G_1$ is an infinite antichain,
and $G_i$ satisfies the Mathias condition $(F_i, X(c))$ for each~$i < 2$.

\begin{lemma}\label{lem:scac-combined-immunity-force-Q}
For every condition~$c = (F_0, F_1)$, there is an extension $(E_0, E_1)$
of~$c$ such that~$|E_i| > |F_i|$ for each~$i < 2$.
\end{lemma}
\begin{proof*}
By assumption, $L^{*}(P) \cap X(c)$ and $I^{*}(P) \cap X(c)$ are both infinite, otherwise
there would be a computable infinite chain or antichain.
Take any $x \in L^{*}(P) \cap X(c)$ and $y \in I^{*}(P) \cap X(c)$. 
The condition $(F_0 \cup \{x\}, F_1 \cup \{y\})$ is the desired extension.
\end{proof*}

A condition~$c$ \emph{forces} a formula~$\varphi(G_0, G_1)$ if
$\varphi(G_0, G_1)$ holds for every pair of sets~$G_0, G_1$ satisfying~$c$.
We want to satisfy the disjunctive requirement $\Rcal_{e_0, e_1, i_0, i_1} = \Rcal_{e_0, i_0}^{G_0} \vee \Rcal_{e_1, i_1}^{G_1}$
for each~$e_0, e_1 \in \omega$ and $i_0, i_1 \in \{0, 1\}$, where
$$
\Rcal_{e,i}^G \mbox{ : } \Phi_e^G \mbox{ is not an infinite subset of } A_i
$$
Note that to obtain the desired property, the pair $(i_0, i_1)$ must range over $\{(0,0)$, $(0,1)$, $(1,0), (1,1) \}$.
For this, we are going to use the fact that $(A_0, A_0)$, $(A_0, A_1)$, $(A_1, A_0)$, and $(A_1, A_1)$
are all combinedly immune, which is a consequence of the facts that $(A_0, A_1)$ are combinedly immune
and that both $A_0$ and $A_1$ are hyperimmune.

\begin{lemma}\label{lem:scac-combined-immunity-force-R}
For every condition~$c$, every pair of indices $e_0, e_1 \in \omega$ and every~$i_0, i_1 \in \{0,1\}$,
there is an extension~$d$ of~$c$ forcing~$\Rcal_{e_0, e_1, i_0, i_1}$.
\end{lemma}
\begin{proof*}
Fix a condition~$c = (F_0, F_1)$.
A \emph{split pair} is a pair of finite sets $E_0, E_1 \subseteq X(c)$
such that $F_0 \cup E_0$ is a finite ascending sequence, $F_1 \cup E_1$ is a finite antichain,
and $\max E_0 \leq_P x$ for each~$x \in E_1$.
Let $\varphi(u, v)$ be the $\Sigma^0_1$ formula which holds if there is 
a split pair $E_0, E_1$, such that 
$\Phi_{e_0}^{F_0 \cup E_0}(u) \downarrow$ and $\Phi_{e_1}^{F_1 \cup E_1}(v) \downarrow$.
We have two cases.

\begin{itemize}
	\item Case 1: the formula $\varphi(u, v)$ is not combinedly essential, say with witness~$x$.
	If there is a pair of infinite sets~$G_0, G_1$ satisfying $c$ and some $r > x$ such that
	$\Phi_{e_0}^{G_0}(r) \downarrow$,
	then let~$E_0 \subseteq G_0$ be such that $F_0 \cup E_0$ is an initial segment of~$G_0$
	for which $\Phi_{e_0}^{F_0 \cup E_0}(r) \downarrow$. In particular, $E_0 \subseteq L^{*}(P) \cap X(c)$
	is an finite increasing sequence, so the condition $d = (F_0 \cup E_0, F_1)$ is an extension
	forcing $\Rcal_{e_1, i_1}^{G_1}$, hence forcing $\Rcal_{e_0, e_1, i_0, i_1}$.
	If there is no such pair of infinite sets~$G_0, G_1$, then the condition~$c$
	already forces $\Rcal_{e_0, i_0}^{G_0}$, hence $\Rcal_{e_0, e_1, i_0, i_1}$.

	\item Case 2: the formula $\varphi(u, v)$ is combinedly essential.
	Note that since the pair $A_0, A_1$ is combinedly immune, so is the pair $A_1, A_0$.
	Moreover, since for each~$i \in \{0, 1\}$, $A_i$ is hyperimmune, the pair $A_i, A_i$
	is combinedly immune. Therefore, for every $i_0, i_1 \in \{0,1\}$,
	the pair $A_{i_0}, A_{i_1}$ is combinedly immune.
	In particular, $\varphi(r, s)$ holds for some $r \in \overline{A}_{i_0}$
	and $s \in \overline{A}_{i_1}$.

	Unfolding the definition of $\varphi(r, s)$, there is a split pair $E_0, E_1$ such that
	$\Phi_{e_0}^{F_0 \cup E_0} \cap \overline{A}_{i_0} \neq \emptyset$
	and $\Phi_{e_1}^{F_1 \cup E_1} \cap \overline{A}_{i_1} \neq \emptyset$.
	Since $\max E_0 <_P x$ for each~$x \in E_1$, either $E_0 \subseteq L^{*}(P)$,
	or $E_1 \subseteq I^{*}(P)$. Therefore, either $d_0 = (F_0 \cup E_0, F_1)$,
	or $d_1 = (F_0, F_1 \cup E_1)$ is a valid extension of $c$.
	In particular, $d_0$ forces $\Rcal^{G_0}_{e_0, i_0}$ and $d_1$ forces
	$\Rcal^{G_1}_{e_1, i_1}$. In both case, there is an extension of $c$
	forcing $\Rcal_{e_0, e_1, i_0, i_1}$.
\end{itemize}
\end{proof*}

Let~$\Fcal = \{c_0, c_1, \dots\}$ be a sufficiently generic filter containing $(\emptyset, \emptyset)$,
where~$c_s = (F_{0,s}, F_{1,s})$. The filter~$\Fcal$ yields a
pair of sets~$G_0, G_1$ defined by~$G_i = \bigcup_s F_{i,s}$.
By Lemma~\ref{lem:scac-combined-immunity-force-Q}, the sets $G_0$ and~$G_1$ are both infinite,
and by Lemma~\ref{lem:scac-combined-immunity-force-R}, the sets $A_0$ and $A_1$ are both $G_i$-immune for some~$i < 2$. 
This completes the proof of Theorem~\ref{thm:scac-combined-immunity}. \qed
\end{proof}

\begin{corollary}
$\wscac \not \leq_c \scac$.
\end{corollary}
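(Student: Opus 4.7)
The plan is to combine the two main theorems of this section. I would take the computable weakly stable partial order $(P, \leq_P)$ produced by Theorem~\ref{thm:wscac-combinedly-immune}, and set $A_0 = S^{*}(P) \cup L^{*}(P)$ and $A_1 = I^{*}(P)$; by that theorem, each $A_i$ is hyperimmune and the pair $A_0, A_1$ is combinedly immune. A preliminary observation I would first record is that every $\wscac$-solution to $P$ is an infinite subset of $A_0$ or an infinite subset of $A_1$: an infinite antichain cannot contain any element of $S^{*}(P) \cup L^{*}(P)$, since every such element is comparable to all but finitely many elements; symmetrically, an infinite chain cannot meet $I^{*}(P)$; and weak stability ($P = A_0 \cup A_1$) closes the case.

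Next I would argue by contradiction. Suppose $\wscac \leq_c \scac$. Since $(P, \leq_P)$ is computable, the reduction supplies a computable stable partial order $(Q, \leq_Q)$ such that every infinite chain or antichain of $Q$ computes a $\wscac$-solution to $P$. Feeding $(Q, \leq_Q)$ together with the pair $A_0, A_1$ into Theorem~\ref{thm:scac-combined-immunity} produces an infinite chain or antichain $H$ of $Q$ for which both $A_0$ and $A_1$ are $H$-immune. The reduction then yields an $H$-computable $\wscac$-solution $S$ to $P$, and by the preliminary observation $S \subseteq A_0$ or $S \subseteq A_1$, so $H$ computes an infinite subset of one of the $A_i$'s, contradicting $H$-immunity.

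The hard work is already packaged into the two previous theorems, so no real obstacle remains; the only point requiring a little care is that computable reducibility grants access to the instance as an oracle, and computability of $(P, \leq_P)$ is what lets us read the output of the reduction as $H$-computable rather than merely $H \oplus P$-computable.
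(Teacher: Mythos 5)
Your proof is correct and matches the paper's argument: both instantiate Theorem~\ref{thm:wscac-combinedly-immune} to get $P$ with $A_0 = S^{*}(P) \cup L^{*}(P)$, $A_1 = I^{*}(P)$, then apply Theorem~\ref{thm:scac-combined-immunity} to any computable stable $Q$ to get a solution $H$ preserving $H$-immunity of both $A_i$, which blocks computing any $\wscac$-solution to $P$. Your preliminary observation (infinite chains of a weakly stable order live in $S^{*}(P) \cup L^{*}(P)$, infinite antichains in $I^{*}(P)$) is exactly the step the paper leaves implicit when concluding that $H$ computes no $P$-chain or $P$-antichain.
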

\begin{proof}
By Theorem~\ref{thm:wscac-combinedly-immune}, there is a computable, weakly stable partial order $(P, \leq_P)$
such that $S^{*}(P) \cup L^{*}(P)$ and $I^{*}(P)$ are both hyperimmune, and are combinedly immune.
By Theorem~\ref{thm:scac-combined-immunity}, for every computable stable partial order $(Q, \leq_Q)$,
there is an infinite $Q$-chain or $Q$-antichain $H$ such that $S^{*}(P) \cup L^{*}(P)$ and $I^{*}(P)$
are both $H$-immune, and therefore such that $H$ does not compute an infinite $P$-chain or $P$-antichain.
\end{proof}

Accordingly, we say that a linear order $(P, \leq_P)$ is \emph{stable} if it is of order type $\omega + \omega^{*}$,
that is, if $P = S^{*}(P) \cup L^{*}(P)$. We let $\sads$ be the restriction of $\ads$ to stable colorings.
Tennenbaum (see Downey~\cite{Downey1998Computability}) constructed a computable linear order of order type $\omega + \omega^{*}$
with no infinite computable ascending or descending sequence. Downey noticed that the construction could be modified so that
the $\omega$ and $\omega^{*}$ part are both hyperimmune. We now show that this is the case for \emph{every}
computable instance of $\sads$ with no computable solution. It follows that every such instance of $\sads$ is a witness
that the Erd\H{o}s-Moser theorem ($\emo$) does not imply $\sads$ over~$\rca$, since the former has been proven
to admit preservation of hyperimmunity (see~\cite{Lerman2013Separating,Patey2015Iterativea}).

\begin{lemma}
For every computable linear order $(P, \leq_P)$ of order type $\omega + \omega^{*}$
with no computable infinite ascending or descending sequence, $S^{*}(P)$ 
and $L^{*}(P)$ are both hyperimmune.
\end{lemma}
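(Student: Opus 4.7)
My plan is to prove the contrapositive: assuming $S^{*}(P)$ is not hyperimmune, I will build a computable infinite $P$-ascending sequence, contradicting the hypothesis on $P$; hyperimmunity of $L^{*}(P)$ will then follow by the dual argument (reverse $P$ and build a descending sequence from a computable dominator of the principal function of $L^{*}(P)$). The starting point is a computable non-decreasing $f$ dominating the principal function of $S^{*}(P)$. The single structural fact I rely on is that in a linear order of type $\omega + \omega^{*}$ every small element is $\leq_P$ every large element, so whenever $[0, f(N)]$ meets $S^{*}(P)$ in at least $N+1$ points (which it does by choice of $f$), the $P$-first $N+1$ elements of $[0, f(N)]$ automatically lie in $S^{*}(P)$. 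This lets me harvest certified small elements computably, just by sorting $[0, f(N)]$ under $\leq_P$ and reading off the smallest ones.

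The construction is then a straightforward recursion. Set $u_0$ to be the $P$-minimum of $[0, f(0)]$, which is small by the observation above. Given a small $u_{n-1}$, search computably for the least $N$ such that among the $P$-first $N$ elements of $[0, f(N)]$ some element is $>_P u_{n-1}$, and take $u_n$ to be the least (in $\mathbb{N}$) such witness. Then $u_n$ automatically lies in $S^{*}(P)$, and $u_n >_P u_{n-1}$ by the search condition, so $(u_n)_{n \in \omega}$ is an ascending sequence in $P$.

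The delicate point, and where I expect the real work to lie, is showing that this search terminates at every step, so that the procedure is total. Letting $s_0 <_P s_1 <_P \ldots$ be the $P$-enumeration of $S^{*}(P)$ and writing $u_{n-1} = s_j$, I would argue that for all sufficiently large $N$ the interval $[0, f(N)]$ contains each of $s_0, \ldots, s_{j+1}$, at which point the $P$-rank of $s_{j+1}$ inside $[0, f(N)]$ is exactly $j+2$ (since no large element of $[0, f(N)]$ can be $P$-below $s_{j+1}$); in particular $s_{j+1}$ sits among the $P$-first $N$ elements as soon as $N \geq j+2$, furnishing a legal witness $>_P u_{n-1}$. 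Hence the search halts, $(u_n)_{n \in \omega}$ is a computable infinite ascending sequence in $P$, and we reach the desired contradiction.
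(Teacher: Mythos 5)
Your proof is correct, but it is noticeably more elaborate than the paper's, which takes a different (and much more economical) route. You work with the dominating-function characterization of hyperimmunity: from a computable $f$ with $f(N) \geq p_{S^{*}(P)}(N)$ you certify small elements by $P$-sorting $[0, f(N)]$ and reading off the first few, then build the ascending sequence by a greedy recursion whose termination you argue via the $P$-rank of $s_{j+1}$. The paper instead works with the array characterization directly: if $F_0, F_1, \dots$ is an array tracing $S^{*}(P)$, then each $F_i$ meets $S^{*}(P)$, and since the small elements form a $\leq_P$-initial segment in an order of type $\omega + \omega^{*}$, the element $\min_P F_i$ is automatically small; the disjointness of the blocks makes $\{\min_P F_i : i \in \omega\}$ an infinite $\vec{F}$-computable subset of $S^{*}(P)$, from which a computable ascending sequence follows at once (this last extraction being left implicit). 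Both arguments hinge on the same structural fact — small precedes large in $\leq_P$ — but the paper exploits it through a single $P$-minimum per block and leaves the ascending-sequence extraction tacit, whereas you spell out the extraction in full and in exchange must carry the bookkeeping of $P$-ranks inside $[0, f(N)]$ to justify termination. Your version is more self-contained; the paper's is the one-liner you would want once you trust that a computable infinite subset of the $\omega$-part yields a computable ascending sequence.
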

\begin{proof}
We first prove that $S^{*}(P)$ is hyperimmune.
Let $F_0, F_1, \dots$ be an array tracing $S^{*}(P)$.
Then, the set $\{ \min_P F_i : i \in \omega \}$ is an infinite $\vec{F}$-computable subset of $S^{*}(P)$,
and therefore the array is not computable. It follows that $S^{*}(P)$ is not traced by any computable array.
The case of $L^{*}(P)$ holds by symmetry.
\end{proof}

\section{Partial orders with compactness}

The framework of preservation of computability-theoretic properties enables one, among other things,
to separate compound statements in reverse mathematics by analyzing the preservation of such properties
by each statement separately. For example, Wang~\cite{Wang2014Definability} separated $\coh + \wkl + \emo$ from~$\sads$
over~$\rca$ using the preservation of proper $\Delta^0_2$ definitions. 
In a previous version of this paper, we asked the following question.

\begin{question}
Does $\ads + \wkl$ imply any of $\rt^2_2$ or $\cac$ over~$\rca$?
\end{question}

The statements $\ads$ and $\wkl$ have both known not to imply $\rt^2_2$ over~$\rca$, but for very different reasons.
By proving in Section~\ref{sect:cac-cb-immunity} that $\cac$ admits preservation of c.b-immunity, we showed
that $\cac$, and \emph{a fortiori} $\ads$, does not imply any notion of compactness. On the other hand, $\rt^2_2$
implies the diagonally non-computable principle ($\dnr$), which is equivalent to a very weak form of compactness,
namely, the Ramsey-type weak weak K\"onig's lemma (see~\cite{Bienvenu2015logical,Flood2014Separating}).

Weak K\"onig's lemma, as for him, does not imply $\rt^2_2$ over~$\rca$ for various reasons.
By the low basis theorem~\cite{Jockusch197201}, one can build a model of~$\wkl$ containing only low sets,
while Jockusch~\cite{Jockusch1972Ramseys} constructed a computable instance of $\rt^2_2$ with no $\Delta^0_2$ solution.
One can also separate $\wkl$ from $\rt^2_2$ thanks the the hyperimmune-free basis theorem~\cite{Jockusch197201}.
Indeed, $\rt^2_2$ has a computable instance whose solutions are of hyperimmune degree. Both separations
cannot be adapted to separate $\wkl+\ads$ from $\rt^2_2$ since by Hirshchfeldt and Shore~\cite{Hirschfeldt2007Combinatorial},
$\rca \vdash \ads \imp \coh$ and therefore $\ads$ has a computable instance with no low solution
and whose solutions are of hyperimmune degree.

The question was recently answered negatively by Towsner~\cite{Towsner2016Constructing}, who developed an involved
technique for separating statements about partial orders in presence of weak K\"onig's lemma.

\begin{theorem}[Towsner~\cite{Towsner2016Constructing}]
$\ads + \wkl$ does not imply $\cac$,
and $\cac + \wkl$ does not imply $\rt^2_2$ over $\rca$.
\end{theorem}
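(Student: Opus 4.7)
The plan is to separate each pair by constructing an $\omega$-model. For the first separation, I would build a Turing ideal $\Mcal$ closed under $\ads$ and $\wkl$ but containing a computable partial order $(P,\leq_P)$ admitting no infinite chain or antichain inside $\Mcal$. Following the framework of this paper, this reduces to identifying a computability-theoretic preservation property $\Phi$ that is admitted by both $\ads$ and $\wkl$ yet fails for some computable $\cac$-instance; one then iterates adding $\ads$- and $\wkl$-solutions while preserving $\Phi$, starting from the ground ideal $\{X : X \leq_T P\}$.

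First I would try to tailor a property $\Phi$ tied to a carefully constructed computable partial order $P$ built by a priority argument: every infinite chain or antichain of $P$ should be ``Ramsey-heavy'' in some technical sense (e.g.\ each computes a witness escaping a prescribed family of $X$-computable traces, or each meets each open set in a fixed $\Pi^0_1$ class), while computable ascending/descending sequences in the linear suborders arising from an $\ads$-use remain controllable. Next I would verify that $\ads$ admits preservation of $\Phi$ by a Mathias-type forcing, in the spirit of the arguments in Sections~\ref{sect:cac-cb-immunity} and onward: essentially the same disjunctive combinatorics used for $\cac$ applies, but now only two linear-order-homogeneous sets are sought.

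The hard part will be showing that $\wkl$ preserves $\Phi$. Standard basis theorems (low, hyperimmune-free, cone-avoiding, PA-avoiding) do not automatically yield paths preserving a property that is sensitive to the internal combinatorics of $P$; indeed, this is why neither the low basis theorem nor the hyperimmune-free basis theorem suffices to separate $\wkl + \ads$ from $\rt^2_2$, as the paragraph before the theorem already notes. Towsner's innovation must therefore be a new forcing notion in which conditions simultaneously control a subtree of $2^{<\omega}$ and a finite-extendibility promise about chains/antichains of $P$, so that at each forcing step one can prove a combined extension lemma: any sufficiently generic filter yields an infinite path through the tree that, together with all previously added sets, still admits no infinite chain or antichain of $P$.

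For the second claim, the plan is analogous, substituting $\rt^2_2$ for $\cac$ and $\cac$ for $\ads$. One fixes a computable coloring $f:[\omega]^2\to 2$ built so that infinite $f$-homogeneous sets are ``heavier'' than any chain or antichain produced by a $\cac$-use; then one identifies a preservation property $\Phi'$ admitted by $\cac$ (which, by Theorem~\ref{thm:cac-preservation-cbimmunity}, is already computationally weak) and by $\wkl$ via the same combined tree/partial-order forcing, but failing for any $f$-homogeneous set. The main obstacle remains the $\wkl$-step: once the combined forcing of the first separation is in place, adapting it from partial-order instances to Ramsey-type instances should be largely routine, since the $\cac$-forcing is also Mathias-type and interacts with tree forcing in the same way $\ads$-forcing does.
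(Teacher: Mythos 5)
The paper does not actually prove this theorem: it is stated as a citation to Towsner~\cite{Towsner2016Constructing}, and the surrounding paragraph says explicitly that Towsner ``developed an involved technique for separating statements about partial orders in presence of weak K\"onig's lemma,'' precisely because the preservation-property framework used elsewhere in the paper breaks down in the presence of~$\wkl$. So there is no proof in the paper against which your sketch can be checked, and any assessment has to be of the sketch on its own terms.

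On its own terms, the sketch correctly identifies the shape of what would be needed --- a forcing whose conditions simultaneously control a tree and Mathias-type data for $\ads$ or $\cac$, because (as the paper already observes, and you repeat) the low and hyperimmune-free basis theorems cannot be used once $\ads$ is in the model. But the proposal never supplies any of the actual ingredients. You do not define the preservation property $\Phi$; you do not construct the pathological computable $\cac$-instance $P$; and, most importantly, you explicitly punt on the $\wkl$ step with ``Towsner's innovation must therefore be a new forcing notion in which conditions simultaneously control a subtree of $2^{<\omega}$ and a finite-extendibility promise about chains/antichains of $P$.'' That sentence is a description of what a proof would have to produce, not a proof. The entire mathematical content of the theorem lives exactly in that unfilled step, and there is a real conceptual obstacle there: it is far from clear that any single property $\Phi$, preserved one Turing ideal at a time as in the rest of this paper, can be made to work, since $\wkl$ solutions are paths through arbitrary $\Pi^0_1$ classes and no basis theorem gives paths respecting a property tailored to the internal combinatorics of a fixed partial order. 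Towsner's actual argument does not iterate a preservation lemma; it constructs the whole model by a single elaborate forcing that interleaves all the requirements at once. Your ``combined extension lemma'' is asserted, not proved, and nothing in the sketch gives evidence that such a lemma holds. The second half of the proposal (replacing $\ads$ by $\cac$ and $\cac$ by $\rt^2_2$) inherits the same gap and is additionally optimistic in calling the adaptation ``largely routine'' when the $\cac$-forcing and the $\rt^2_2$-instance interact in genuinely different ways. In short: a reasonable orientation toward the problem, but there is no proof here, and the missing piece is precisely the one the cited work was written to supply.
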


\vspace{0.5cm}

\noindent \textbf{Acknowledgements}. The author is thankful to the reviewers for their numerous suggestions
of improvement.

\vspace{0.5cm}

\bibliographystyle{splncs03}
\bibliography{../bibliography}

\end{document}